\def\Am#1{A_{#1}}
\def\Aml#1#2{A_{#1}^{(#2)}}
\def\Vl#1#2{V_{#1,#2}^{}}
\def\Vlt#1#2{V_{#1,#2}^*}
\def\Hl#1{H_{#1 +1,#1}^{}}
\def\Hld#1{H_{#1 +1,#1}^{\dagger}}
\def\Hlt#1{H_{#1 +1,#1}^{*}}
\def\Pm#1{P_{_{#1}}}
\def\Qm#1{Q_{#1}}
\def\Rm#1{R_{#1}}
\def\Ul#1{U_{#1}^{}}
\def\Ult#1{U_{#1}^*}
\def\Wl#1{W_{#1}^{}}
\def\Wlt#1{W_{#1}^*}
\def\Sl#1{\Sigma_{#1+1,#1}^{}}
\def\Slt#1{\Sigma_{#1+1,#1}^*}
\def\Sld#1{\Sigma_{#1+1,#1}^\dagger}
\def\Ll#1{\Lambda_{#1+1}^{}}
\def\Ikl#1#2{I_{#1,#2}}
\def\Il#1{I_{#1}}
\def\cng#1{{\color{blue}#1}}
\newcommand\smallO{
  \mathchoice
    {{\scriptstyle\mathcal{O}}}% \displaystyle
    {{\scriptstyle\mathcal{O}}}% \textstyle
    {{\scriptscriptstyle\mathcal{O}}}% \scriptstyle
    {\scalebox{.7}{$\scriptscriptstyle\mathcal{O}$}}%\scriptscriptstyle
  }
\newtheorem{remark}[theorem]{Remark}
\newcounter{algo@row}
\newcounter{algo@rowindent}
\newcommand{\algofont}[1]{\textbf{#1}}% S1
\newcommand{\algonumbersize}[1]{\scriptsize{#1}}% S2
\newcommand{\algopreitem}[1][\arabic{algo@row}]{\texttt{\algonumbersize{#1}}}
\newcommand{\algoitemskip}{\hspace{\value{algo@rowindent}cc}}
\newenvironment{algo}{\vskip.3em\small%
  \begin{list}{\algopreitem\texttt{\algonumbersize{:}}}{%
      \usecounter{algo@row}%
      \setcounter{algo@rowindent}{0}%
      \setlength{\itemindent}{2em}%
      \setlength{\labelwidth}{2em}% indent=1em
      \setlength{\parsep}{0cm}%
    }%
}{
  \end{list}\vskip-.5em
}
\newcommand{\algonewnestedopen}[2]{
  \newcommand{#1}[1][]{%
    \ifthenelse{\equal{##1}{}}{\item}{\item[{\algopreitem[##1]}]}
    \algoitemskip\algofont{#2}%
    \addtocounter{algo@rowindent}{1}%
    \ignorespaces
  }
}
\newcommand{\algonewnestedaux}[2]{
  \newcommand{#1}[1][]{
    \addtocounter{algo@rowindent}{-1}
    \ifthenelse{\equal{##1}{}}{\item}{\item[{\algopreitem[##1]}]}
    \algoitemskip\algofont{#2}%
    \addtocounter{algo@rowindent}{+1}%
    \ignorespaces
  }
}
\newcommand{\algonewnestedclose}[2]{
  \newcommand{#1}[1][]{
    \addtocounter{algo@rowindent}{-1}
    \ifthenelse{\equal{##1}{}}{\item}{\item[{\algopreitem[##1]}]}
    \algoitemskip\algofont{#2}%
    \ignorespaces
  }
}
\newcommand{\algonewcommand}[2]{
  \newcommand{#1}[1][default]{
    \ifthenelse{\equal{##1}{default}}{\item}{\item[{\algopreitem[##1]}]}%
    \algoitemskip\algofont{#2}%
    \ignorespaces
  }%
}
\newcommand{\algonewkeyword}[2]{\newcommand{#1}{\algofont{#2}}}
\algonewcommand{\STATE}{\ignorespaces}
\algonewcommand{\INPUT}{Input: }
\algonewcommand{\pINPUT}{\phantom{Input: }}
\algonewcommand{\COMPUTE}{Compute: }
\algonewcommand{\OUTPUT}{Output: }
\algonewcommand{\pOUTPUT}{\phantom{Output: }}
\algonewnestedopen{\IF}{if }
\algonewnestedaux{\ELSEIF}{else if }
\algonewnestedaux{\ELSE}{else }
\algonewnestedclose{\ENDIF}{end if }
\algonewnestedopen{\FOR}{for }
\algonewnestedclose{\ENDFOR}{end for }
\algonewnestedopen{\WHILE}{while }
\algonewnestedclose{\ENDWHILE}{end while }
\algonewcommand{\BREAK}{break}%
\algonewkeyword{\For}{for }%
\algonewkeyword{\To}{to }%
\algonewkeyword{\Do}{do }%
\algonewkeyword{\If}{if }%
\algonewkeyword{\Then}{then }%
\algonewkeyword{\Else}{else }%
\algonewkeyword{\End}{end }%
\algonewkeyword{\AND}{and }%
\algonewkeyword{\True}{true }%
\algonewkeyword{\False}{false }%
\algonewkeyword{\Call}{call }%
\algonewkeyword{\irbleigs}{irbleigs }%
\algonewkeyword{\tridiag}{tridiag}%
\algonewkeyword{\reorth}{reorth}%
\begin{document}
\title{Error Estimates for Arnoldi--Tikhonov \\ Regularization for Ill-Posed
Operator Equations}
\author{Ronny Ramlau\thanks{Institute for Industrial Mathematics, Kepler University, Linz, and Johann Radon Institute for Computational and
Applied Mathematics, Austrian Academy of Sciences, Altenbergerstr. 69,
A-4040 Linz, Austria. E-mail: {\tt ronny.ramlau@oeaw.ac.at}}
\and
Lothar Reichel\thanks{Department of Mathematical Sciences, Kent State
University, Kent, OH 44242, USA. E-mail: {\tt reichel@math.kent.edu}}
}

\maketitle

\begin{abstract}
Most of the literature on the solution of linear ill-posed operator equations, or their
discretization, focuses only on the infinite-dimensional setting or only on the solution
of the algebraic linear system of equations obtained by discretization. This paper
discusses the influence of the discretization error on the computed solution. We consider
the situation when the discretization used yields an algebraic linear system of equations
with a large matrix. An approximate solution of this system is computed by first
determining a reduced system of fairly small size by carrying out a few steps of the Arnoldi
process. Tikhonov regularization is applied to the reduced problem and the regularization
parameter is determined by the discrepancy principle. Errors incurred in each step of the
solution process are discussed. Computed examples illustrate the error bounds derived.
\end{abstract}

\begin{keywords}
ill-posed problem, Arnoldi's method, Tikhonov regularization
\end{keywords}

\section{Introduction}
Let $A:{\mathcal X}\to{\mathcal Y}$ be an injective linear operator between the Hilbert spaces
${\mathcal X}$ and ${\mathcal Y}$, and assume that $A$ is not continuously invertible. We
are concerned with the solution of operator equations of the form
\begin{equation}\label{Opgl}
Ax=y,\qquad x\in{\mathcal X},\quad y\in{\mathcal Y}.
\end{equation}
Let $\|\cdot\|_{\mathcal X}$ and $\|\cdot\|_{\mathcal Y}$ denote the norms of the
spaces ${\mathcal X}$ and ${\mathcal Y}$, respectively. We will assume that equation
\eqref{Opgl} is consistent and are interested in determining the solution of minimal norm.
We denote this solution by $\widehat{x}$. The solution $\widehat{x}$ might not depend
continuously on $y$. Therefore its computation is an ill-posed problem.

The right-hand side $y$ of \eqref{Opgl} is assumed not to be available; only an
error-contaminated approximation $y^\delta\in{\mathcal Y}$ of $y$ is known. We assume that
$y^\delta$ satisfies
\begin{equation}\label{noise}
\|y-y^\delta\|_{\mathcal Y}\le \delta,
\end{equation}
with a known bound $\delta>0$. The solution of the equation
\begin{equation}\label{Opgldelta}
Ax=y^\delta,\qquad x\in{\mathcal X},\quad y^\delta\in{\mathcal Y},
\end{equation}
obtained by replacing $y$ by $y^\delta$ in \eqref{Opgl}, if it exists, generally, is not a
meaningful approximation of the desired solution $\widehat{x}$ since $A$ is not continuously
invertible. In fact, equation \eqref{Opgldelta} might not have a solution even when
equation \eqref{Opgl} does. A regularization method, which replaces the operator $A$ by a
nearby operator, such that the solution of the modified equation so obtained exists and is
less sensitive to the error in $y^\delta$, has to be used to obtain a meaningful
approximation of $\widehat{x}$.

The numerical solution of \eqref{Opgldelta} requires discretization at a certain stage of
the process. In general, this can be done in two ways:
\vskip3pt
\begin{enumerate}
\item[(i)] {\it Regularize then discretize:} In this approach, the infinite-dimensional
ill-posed problem is transformed into a well-posed problem, e.g., by means of {\it Tikhonov
regularization}. Then the well-known error estimates for the regularized solution, see,
e.g., \cite{EHN} for regularization in Hilbert spaces, can be applied. In a second step
the now well-posed regularized equation is discretized, and available error estimators for
well posed-problems, e.g., from the theory of finite elements, can be used. This approach
has been followed in, e.g., \cite{NEUBAUER1988507,MPRS2001,HHA,CKW}.
\vskip3pt
\item[(ii)] {\it Discretize then regularize:} The discretization of the ill-posed operator
equation \eqref{Opgldelta} yields a linear system of algebraic equations
\begin{equation}\label{discOpgl}
	A_{n}x_n=y_n^\delta
\end{equation}
with an ill-conditioned, possibly singular, matrix $A_{n}\in \mathbb{R}^{n,n}$, and
vectors $x_n,y_n^\delta\in\mathbb{R}^n$. Well-known methods from linear algebra are used
for its solution; see, e.g., \cite{CMRS,HH,Hansenbook}. The difficulty with this approach
is to obtain convergence and convergence rate results for the distance between the
solution of the infinite-dimensional problem \eqref{Opgl} and its discretized counterpart
$x_n$ fulfilling (\ref{discOpgl}), see, e.g., \cite{Natterer1977,DickenMaass1996}.
\end{enumerate}
\vskip3pt
As mentioned above, the approach (i) works particularly well for different variants of
Tikhonov regularization. Iterative methods, however, require frequent application of the
operator, and maybe of its adjoint, which is only possible in a discretized form. Iterative
methods therefore belong to category (ii). An analysis of approach (ii) has been carried
out for an adaptive version of Landweber iteration \cite{RamlauLandweber}, but to the best
of our knowledge this approach has not been investigated for Krylov subspace methods.
Additionally, methods that work exceptionally well in finite dimensions but have no
infinite-dimensional counterpart, or for which an error analysis is missing in infinite
dimensions, belong to category (ii). The latter holds for the method discussed in the
present paper.

In this paper, we start with two continuous linear operator equations, \eqref{Opgl} and
\eqref{Opgldelta}, and discretize the latter to obtain the linear system of algebraic
equations \eqref{discOpgl}. We are concerned with the situation when the matrix $A_n$ is
large and, in particular, when $A_n$ is too large to make the computation of its singular
value decomposition attractive. Then we apply the Arnoldi process to compute an
approximation of fairly low rank of the matrix $A_n$ in \eqref{discOpgl}. We replace $A_n$
in \eqref{discOpgl} by this low-rank approximation, and compute an approximate solution of
the linear system of equations with the aid of Tikhonov regularization. The replacement of
$A_n$ by a low-rank approximation reduces the computational effort required for Tikhonov
regularization when the matrix $A_n$ is large, which is the situation of interest to us.
Our approach allows us to solve problems with a matrix $A_n$ that is too large to make
the use of direct solution methods, which require factorization of a large matrix, e.g., of
$A_n$ or a related matrix, too expensive to be attractive or feasible. We will discuss the
effect on the computed solution of discretization errors that stem from replacing the
operator $A$ by the matrix $A_n$, as well as the effect of the error in the right-hand side
$y^\delta$. Moreover, we are concerned with the influence on the computed solution of the
replacement of the matrix $A_n$ in the linear system \eqref{discOpgl} by a low-rank matrix
determined by the Arnoldi process. We remark that Tikhonov regularization based on partial
Arnoldi decomposition, and some variations thereof, have been described in
\cite{CMRS,DR,GNR2,HMR,LR} and in references therein. The contribution of this paper is to
provide an error analysis.

This paper is organized as follows. Section \ref{sec2} discusses results by Natterer
\cite{Natterer1977} on the discretization of integral operators. Discretization
yields the linear system of algebraic equations \eqref{discOpgl}. We assume that the
matrix $A_n$ determined by discretization is so large that factorization is
unattractive or unfeasible. Section \ref{sec3} reviews the Arnoldi process for
computing an approximation of fairly low rank of the matrix $A_n$ in \eqref{discOpgl}. We
use this low-rank approximation in Tikhonov regularization and obtain a quite efficient
solution method. To analyze the performance of this solution approach, we have to take into
account the discretization error as well as the error incurred by approximating the matrix
$A_n$ by the low-rank approximation furnished by the Arnoldi process. Section \ref{sec4}
applies bounds due to Neubauer \cite{NEUBAUER1988507} to the computed solution obtained
by the Tikhonov regularized problem that uses the approximation of the matrix $A_n$ in
\eqref{discOpgl} computed with the Arnoldi process. We remark that while the bounds
provided by Natterer \cite{Natterer1977} shed light on the influence of the discretization
error on the computed solution, they are not useful for assessing the effect of
approximating the matrix $A_n$ by a low-rank approximation determined by the
Arnoldi process. We will comment further on this issue in Remark \ref{rm4.10} of
Section \ref{sec4}. A few
computed examples that illustrate the theory are presented in Section \ref{sec5}, and
concluding remarks can be found in Section \ref{sec6}.

\section{Discretization of the operator equation}\label{sec2}
To be able to numerically compute an approximate solution of equation \eqref{Opgldelta} in
the infinite-dimensional Hilbert space ${\mathcal X}$, the equation first has to be
discretized. This results in the finite-dimensional equation \eqref{discOpgl}. We
introduce a discretization and define a finite-dimensional least-squares problem similarly
as Natterer \cite{Natterer1977}, who investigated regularization properties of projection
methods.

Introduce the finite-dimensional subspaces
\begin{eqnarray*}
\mathcal{X}_n&\subset & {\mathcal X},\hspace{1cm} \dim ({\mathcal X}_n)=n,\\
{\mathcal Y}_n &=& A{\mathcal X}_n,
\end{eqnarray*}
and define projectors $P_n:{\mathcal X}\to{\mathcal X}_n$ and
$Q_n:{\mathcal Y}\to{\mathcal Y}_n$. The space $\mathcal{X}_n$ is chosen for its
convenience to use in applications and for the approximation properties of its elements.
For instance, $\mathcal{X}_n$ may be a space of piece-wise polynomials or finite
elements.

Consider the linear system of equations
\begin{equation}\label{linsys}
Q_nAP_n x = Q_n y^\delta.
\end{equation}
We identify the matrix $A_n$ and vector $y_n^\delta$ in \eqref{discOpgl} with the
finite-dimensional operator $Q_nAP_n$ and the right-hand side $Q_n y^\delta$ in
\eqref{linsys}, respectively. The unique least-squares solution of minimal norm of
equation
\eqref{linsys} is given by $x_n:=A_n^\dag y_n^\delta$, where $A_n^\dag$ denotes the
Moore--Penrose pseudoinverse of the matrix $A_n$. We identify this solution of
\eqref{linsys} with the solution in ${\mathbb R}^n$ of \eqref{discOpgl}.

Let $\{e_j\}_{j=1}^n$ form a convenient basis for ${\mathcal X}_n$, such as a
basis of piece-wise polynomials or finite elements with local support. Consider the
representation
\begin{equation}\label{nbasis}
x_n=\sum_{j=1}^n x_j^{(n)} e_j
\end{equation}
of an element $x_n\in{\mathcal X}_n$. We identify the function $x_n$ with the vector
\[
\vec{x}_n=(x_1^{(n)},x_2^{(n)},\ldots,x_n^{(n)})^T\in{\mathbb R}^n.
\]
To shed light on how $\|x_n\|_{\mathcal X}$ relates to $\|\vec{x}_n\|_2$, we
introduce an orthonormal basis $\{\widehat{e}_j\}_{j=1}^n$ for ${\mathcal X}_n$. There is
a nonsingular matrix $M_n=[m_{ij}]\in{\mathbb R}^{n\times n}$ such that
\[
(e_1,e_2,\ldots,e_n)=(\widehat{e}_1,\widehat{e}_2,\ldots,\widehat{e}_n)M_n,
\]
i.e., $e_j=\sum_{i=1}^n m_{i,j}\widehat{e}_i$ for $j=1,2,\ldots,n$.
For instance, when the basis $\{\widehat{e}_j\}_{j=1}^n$ is determined from
$\{e_j\}_{j=1}^n$ by the Gram--Schmidt process, the matrix $M_n$ is upper triangular.

We obtain from \eqref{nbasis} that
\[
x_n=(e_1,e_2,\ldots,e_n)\vec{x}_n=
(\widehat{e}_1,\widehat{e}_2,\ldots,\widehat{e}_n)M_n\vec{x}_n.
\]
It follows that
\begin{equation}\label{ubd}
\|x_n\|_{\mathcal X}=\|M_n\vec{x}_n\|_2\leq\|M_n\|_2\|\vec{x}_n\|_2=
\sigma_{\max}(M_n)\|\vec{x}_n\|_2,
\end{equation}
where $\sigma_{\max}(M_n)$ denotes the largest singular value of the matrix $M_n$. Let
$\sigma_{\min}(M_n)$ stand for the smallest singular value of $M_n$. Then we obtain
analogously to \eqref{ubd} that
\[
\|x_n\|_{\mathcal X}\geq\sigma_{\min}(M_n)\|\vec{x}_n\|_2.
\]
We will assume that there are constants $c_{\min}$ and $c_{\max}$ (independent of $n$)
such that
\[
0<c_{\min}\leq\sigma_{\min}(M_n),\qquad \sigma_{\max}(M_n)\leq c_{\max}<\infty\qquad\forall n.
\]
Then
\begin{equation}\label{cob}
c_{\min} \|\vec{x}_n\|_2\leq\|x_n\|_{\mathcal{X}} \leq c_{\max} \|\vec{x}_n\|_2.
\end{equation}
Thus, the norms $\|\cdot\|_{\mathcal X}$ and $\|\cdot\|_2$ are equivalent. We will therefore
simply write $\vec{x}_n$ as $x_n$. The equivalence will be explicitly used in
Section \ref{sec4}.

The solution $x_n\in{\mathcal X}_n$ of \eqref{linsys} might not be a useful approximation of
the desired solution $\widehat{x}$ of \eqref{Opgl} due to a large propagated error
stemming from the error in the available data $y_n^\delta$. We therefore would like to
determine a bound for $\|\widehat{x}-x_n\|_{\mathcal X}$. This is generally not possible
without some additional assumptions on the solution $\widehat{x}$ of \eqref{Opgl}; in
particular, it is not sufficient that $A$ and $A_n$ be close.

Let $\Omega\subseteq\mathbb{R}^N$, ${\mathcal X}=L_2(\Omega)$, and define the Sobolev
spaces ${\mathcal H}^l={\mathcal H}^l(\Omega)$ for $l\in{\mathbb R}$. Assume that
\begin{equation}\label{InvertabilityCond}
	\|Ax\|_{\mathcal Y}\sim \|x\|_{{\mathcal H}^{-l}}
\end{equation}
holds for all $x\in{\mathcal H}^{-l}$ and some $0<l<\infty$, i.e., the operator $A:
{\mathcal H}^{-l}\to {\mathcal Y}$ is continuously invertible. The theory developed by
Natterer \cite{Natterer1977} requires that \eqref{InvertabilityCond} holds for a finite
value of $l$.

% Example 2.1. Consider the mildly ill-posed Volterra integral equation of the first kind
% \[
% [Ax](s)=\int_0^s x(t)dt,\qquad 0\leq s\leq 1.
% \]
% Then \eqref{InvertabilityCond} holds with $l=1$. $~~\Box$
%
% Example 2.2. Let
% \[
% [Ax](s)=\int_{-6}^6 k(s-t)x(t)dt,\qquad -6\leq s\leq 6,
% \]
% with
% \[
% k(t)=\left\{\begin{array}{cc} 1+\cos\left(\frac{\pi}{3}t\right), & ~~-3\le t\le 3,\\
%                                      0,       & \mbox{~~otherwise}.
%             \end{array}\right.
% \]
% This integral operator is discussed by Phillips \cite{Ph}. The kernel $k$ has jump
% discontinuities in the second derivative at $t=\pm 3$. It follows that $l=2$. $~~\Box$

Example 2.1
Let $\Omega\subset\mathbb R^2$, $Z=\{(\omega , t)\in \mathbb R^3: \omega\in \mathbb R^2,~
\|\omega\|=1,~t\in \mathbb R\}$. Let $\omega^\perp$ be a unit vector perpendicular to
$\omega$, and define the Radon transform
\begin{eqnarray*}
 A&:& L_2 (\Omega)\to L_2 (Z),\\
(Ax)(\omega,s)&:=& \int\limits_{\mathbb R} x(s\omega+t\omega^\perp)\, dt.
\end{eqnarray*}
Then \eqref{InvertabilityCond} holds with $l=1/2$; see \cite{Natterer:2001aa}. $~~\Box$

Example 2.2. Let
\begin{eqnarray*}
A&:& L_2 (\mathbb R^d)\to L_2 (\mathbb R^d),\\
(Ax)(\omega,s)&:=& (k\ast x)(s)=\int\limits_{\mathbb R^d} k(s-t)x(t)\, dt,
~~s\in\mathbb R^d,
\end{eqnarray*}
for some kernel function $k\in L_2(\mathbb R^d)$. If the Fourier transform $\hat k$ of $k$
satisfies
\[
  |\hat{k}(\xi)|\sim (1+|\xi|^2)^{-\beta/2},
\]
then \eqref{InvertabilityCond} holds with $l=\beta$, see, e.g., \cite{GerthRamlau}.
 $~~\Box$\\

Assume that the minimal norm solution $\widehat{x}$ of \eqref{Opgl} lives in
${\mathcal H}^k$.
Natterer \cite{Natterer1977} shows that if the operator $A$ is injective and the subspaces
${\mathcal X}_n$, $n=1,2,\ldots~$, are chosen so that an inverse estimate is fulfilled (see
\cite[eq.\,(4.1)-(4.5)]{Natterer1977} for details on the latter), then one obtains
the bound
\begin{equation}\label{errbd}
	\|\widehat{x}-x_n\|_{\mathcal X}\le
	C \left (h(n)^k \|\widehat{x}\|_{{\mathcal H}^k}+h(n)^{-l}\delta \right )
\end{equation}
for some constant $C$ that can be chosen independently of $h(n)$, $\widehat{x}$, and
$\delta$. Here, $h=h(n)>0$ is a discretization parameter that depends on the approximation
property of the subspaces ${\mathcal X}_n$, $n=1,2,\ldots~$, i.e., on how well $\widehat{x}$
can be approximated by an element in ${\mathcal X}_n$; in particular, $h\searrow 0$ as
$n\rightarrow\infty$. The parameter $\delta>0$ in \eqref{errbd} is the bound
\eqref{noise}; see \cite{Natterer1977}. An optimal dimension of the discretized problem is
given by
\begin{equation}\label{nofh}
  n\cng{\sim} h^{-1}\left(\left(\frac{\delta}
  {\|\widehat x\|_{{\mathcal H}^k}}\right)^{1/(k+l)}\right)
\end{equation}
and yields the bound
\begin{equation}\label{optimal_rate}
	\|\widehat{x}-x_n\|_{\mathcal X}\le
	C' \|\widehat x\|_{{\mathcal H}^k}^{l/(k+l)}\delta^{k/(k+l)}
\end{equation}
for some constant $C'>0$; see Natterer \cite{Natterer1977} for details. For instance,
spline and finite element approximation spaces ${\mathcal X}_n$ allow for bounds of the
type \eqref{errbd} and \eqref{optimal_rate}. Natterer \cite{Natterer1977} proposes that
the dimension $n$ of the solution subspace of the discretized problem \eqref{discOpgl} be
chosen according to \eqref{nofh}. This choice provides regularization of the operator
equation \eqref{Opgldelta} and no further regularization is necessary.

We note that the use of wavelet-based projection methods also has been investigated for
the solution of ill-posed problems. Regularization properties of wavelet methods have
been shown by Dicken and Maa\ss\ \cite{DickenMaass1996}.

Convergence rates analogous to \eqref{optimal_rate}, when $h$ is chosen according to
\eqref{nofh}, also can be established in a different setting; see Math\'e and Pereverzev
\cite{MathePerev01}. They assume that the operator $A$ is continuously invertible in
Hilbert scales (which resembles the condition \eqref{InvertabilityCond}), and show
convergence rates in a stochastic noise setting with respect to norms of the relevant
Hilbert scales; see \cite[Theorem 6.3]{MathePerev01}.

We conclude this section with a comment on condition \eqref{noise}. Let $y_n=Q_ny$ and
$y_n^\delta=Q_ny^\delta$. We will assume that
$\|y-y^\delta\|_{\mathcal Y}\approx\|y_n-y_n^\delta\|_{\mathcal Y}$. Then \eqref{noise}
translates to
\begin{equation}\label{noise2}
\|y_n-y_n^\delta\|_{\mathcal Y}\,\substack{ < \\ \approx}\, \delta.
\end{equation}
It is convenient to replace the norm in \eqref{noise2} by the Euclidean norm. This can be
achieved analogously as in the beginning of this section: Let $\{f_j\}_{j=1}^n$ form a
convenient basis for ${\mathcal Y}_n$, such as $f_j=Ae_j$, where $\{e_j\}_{j=1}^n$ is a
basis for ${\mathcal X}_n$. Represent the element $y_n\in{\mathcal Y}_n$ as
\[
y_n=\sum_{j=1}^n y_j^{(n)} f_j,
\]
and define the vector $\vec{y}_n=(y_1^{(n)},y_2^{(n)},\ldots,y_n^{(n)})^T\in{\mathbb R}^n$.
We would like to bound $\|y_n\|_{\mathcal Y}$ in terms of $\|\vec{y}_n\|_2$.
Introduce an orthonormal basis $\{\widehat{f}_j\}_{j=1}^n$ for ${\mathcal Y}_n$. There is
a nonsingular matrix $N_n\in{\mathbb R}^{n\times n}$ such that
\[
[f_1,f_2,\ldots,f_n]=[\widehat{f}_1,\widehat{f}_2,\ldots,\widehat{f}_n]N_n.
\]
It follows similarly as \eqref{ubd} that
\[
\|y_n\|_{\mathcal Y}\leq\sigma_{\max}(N_n)\|\vec{y}_n\|_2,
\]
where $\sigma_{\max}(N_n)$ denotes the largest singular value of $N_n$. We will assume
that there is an upper bound $d_{\max}$, independent of $n$, such that
\[
\sigma_{\max}(N_n)\leq d_{\max}<\infty.
\]

In computations, we will apply the discrepancy principle based on the inequality
\begin{equation}\label{discrp2}
\|y_n-y_n^\delta\|_2\leq\delta,
\end{equation}
which implies that
\[
\|y_n-y_n^\delta\|_{\mathcal Y}\,\substack{<\\ \approx}\, d_{\max}\delta.
\]

\section{Arnoldi decomposition of a matrix}\label{sec3}
Let $\Am{n}$ and $y_n^\delta$ be as in \eqref{discOpgl}, and assume that $n$ is large.
The Arnoldi process is a popular approach to reduce a large matrix to a small one by
evaluating matrix-vector products with the large matrix and applying Gram--Schmidt
orthogonalization. The small matrix, denoted by $H_{\ell+1,\ell}$ below, is an
orthogonal projection of $A_n$.
Application of $1\leq\ell\ll n$ steps of the Arnoldi process to the
matrix $\Am{n}$ with initial vector $y_n^\delta$ gives the decomposition
\begin{equation}\label{arndec}
\Am{n}V_{n,\ell} = V_{n,\ell+1} H_{\ell+1,\ell},
\end{equation}
where the columns of the matrix $V_{n,\ell+1}\in{\mathbb R}^{n,\ell+1}$ form an
orthonormal basis for the Krylov subspace
\[
{\mathcal K}_{\ell+1}(\Am{n},y_n^\delta):=
{\rm span}\{y_n^\delta,\Am{n}y_n^\delta,\ldots,\Am{n}^\ell y_n^\delta\}
\]
with respect to the inner product
\begin{equation}\label{innerprod}
\langle u, w\rangle := \frac{1}{n}\sum_{j=1}^n u_jw_j,\quad
u=(u_1,\ldots,u_n)^T,~w=(w_1,\ldots,w_n)^T\in\mathbb{R}^n
\end{equation}
and associated norm
\[
\|u\|_2:=\sqrt{\langle u, u\rangle};
\]
see, e.g., \cite{Sa}. We also will denote the spectral norm of a matrix by $\|\cdot\|_2$.
The matrix $V_{n,\ell}$ in \eqref{arndec} is made up of the first $\ell$ columns of
$V_{n,\ell+1}$, and $H_{\ell+1,\ell}\in{\mathbb R}^{\ell+1,\ell}$ is an upper Hessenberg
matrix, i.e., all entries below the subdiagonal vanish. We assume the generic situation
that the subspace ${\mathcal K}_{\ell+1}(\Am{n},y_n)$ is of dimension $\ell+1$ for all
$\ell\geq 0$, otherwise the computations simplify; see below.

\begin{algorithm}[!hbt]
\begin{algo}
\INPUT $\Am{n}\in{\mathbb R}^{n,n}$, $y_n^\delta\in{\mathbb R}^n\backslash\{0\}$, number
of steps $\ell$.
\STATE $v_1:=y_n^\delta/\|y_n^\delta\|_2$
\FOR $j=1$ \To $\ell$
\STATE $w:=\Am{n}v_j$
\FOR $k=1$ \To $j$
\STATE $h_{k,j}:=\langle v_j,w\rangle$
\STATE $w:=w - v_j h_{k,j}$
\ENDFOR
\STATE $h_{j+1,j}:= \|w\|_2$; $v_{j+1}:=w/h_{j+1,j}$
\ENDFOR
\OUTPUT Upper Hessenberg matrix $H_{\ell+1,\ell}=[h_{k,j}]\in{\mathbb R}^{\ell+1,\ell}$,
matrix
\pOUTPUT $V_{n,\ell+1}=[v_1,v_2,\ldots,v_{\ell+1}]\in{\mathbb R}^{n,\ell+1}$ with
orthonormal columns
\end{algo}
\caption{The Arnoldi process}
\label{alg:arnoldi}
\end{algorithm}

Algorithm \ref{alg:arnoldi} describes the Arnoldi process for computing the decomposition
\eqref{arndec}. The algorithm is said to break down in iteration $j$ if $h_{k+1,k}>0$ for
$1\leq k<j$, and $h_{j+1,j}=0$ in line 9. Then the decomposition \eqref{arndec} simplifies
to
\[
\Am{n}V_{n,j}=V_{n,j}H_{j,j}
\]
and the solution of \eqref{discOpgl} lives in the Krylov subspace
${\mathcal K}_j(\Am{n},y_n^\delta)$ if the matrix $H_{j,j}$ is nonsingular. This is
secured, e.g., if the matrix $\Am{n}$ is nonsingular. A discussion on how to continue the
Arnoldi process in case of breakdown when $H_{j,j}$ is singular is provided in \cite{RY1}.

We remark that the Arnoldi process simplifies to the Lanczos process when the matrix $A_n$
is symmetric; see \cite[Chapter 6]{Sa}.

\section{The Arnoldi--Tikhonov method}\label{sec4}
The results of Section \ref{sec2} suggest that the discretized system can be solved
without further regularization if the discretization is carried out on a suitably (but not
too) fine grid. However, numerical realization of regularization by discretization only
often leads to difficulties, because an appropriate value of the dimension $n$ of the
solution subspace generally is not known before the computations are begun. For instance,
when ${\mathcal X}_n$ is a finite element space, it may be necessary to determine several
discretizations and associated solutions (for different values of $n$) to find a suitable
$n$-value. We therefore prefer to first discretize the spaces $\mathcal X$ and
$\mathcal Y$ to obtain $n$-dimensional subspaces ${\mathcal X}_n$ and ${\mathcal Y}_n$,
respectively,
that allow approximation of elements of $\mathcal X$ and $\mathcal Y$ with sufficient
accuracy, and then regularize \eqref{discOpgl} by Tikhonov's method. In the remainder of
this section, we identify the spaces ${\mathcal X}_n$ and ${\mathcal Y}_n$ with
${\mathbb R}^n$ and the finite-dimensional operator $Q_nAP_n$ in \eqref{linsys} with the
matrix $A_n\in{\mathbb R}^{n,n}$ in \eqref{discOpgl}.

The solution method considered consists of three steps:
\vskip3pt
\begin{enumerate}
\item {\bf Discretization of the (infinite-dimensional) operator equation.} This requires
an estimate of the distance between the solution of the infinite-dimensional system and
the solution of its finite-dimensional approximation.
\vskip3pt
\item {\bf Definition of a regularized finite-dimensional system.} Estimate the distance
between the solution of the finite-dimensional system and its regularized version.
\vskip3pt
\item  {\bf Compute an approximate solution of the regularized solution.} Estimate the
distance between the solution of the regularized finite-dimensional system and its
computed approximation.
\end{enumerate}
\vskip3pt
The error of the computed solution is bounded by the sum of the norms of these three
errors. We will discuss each one of these errors separately.

Let the Arnoldi decomposition \eqref{arndec} be available, and introduce the approximation
\begin{equation}\label{ArnoldiApprox}
	\Aml{n}{\ell}:= \Vl{n}{\ell+1}\Hl{\ell}\Vlt{n}{\ell}
\end{equation}
of the matrix $\Am{n}$. In what follows, we need to compute an estimate for the distance
between $\Am{n}$ and $\Aml{n}{\ell}$. To this end we may compute the operator norm
$\|\Am{n}-\Aml{n}{\ell}\|_2$, which can be evaluated as the largest singular value of the
matrix $\Am{n}-\Aml{n}{\ell}$. It has recently been shown that a few of the largest
singular values of a large matrix that stems from the discretization of a linear ill-posed
problem can be computed quite inexpensively; see \cite{OR} for discussions and
illustrations. Alternatively, we may use the easily computable Frobenius norm,
\[
\|\Am{n}\|_F := \sqrt{\sum_{i,j=1}^n |a_{ij}|^2},
\]
where $A_n=[a_{ij}]_{i,j=1}^n$, and apply the bound
\[
  \|\Am{n}-\Aml{n}{\ell}\|_2\le \|\Am{n}-\Aml{n}{\ell}\|_F.
\]

Assume that
\begin{equation}\label{hbd}
	\|\Am{n}-\Aml{n}{\ell}\|_2\le h_{\ell}
\end{equation}
for some scalar $h_{\ell}>0$ and define the Tikhonov functional
\begin{equation}\label{tik1}
J_{\alpha,n,\ell}(x_n):=\|\Aml{n}{\ell}x_n-y_n^\delta\|_2^2+\alpha\|x_n\|_2^2,
\end{equation}
where $\alpha>0$ is a regularization parameter. We will solve the minimization problem
\begin{equation}\label{tiksol}
x_{\alpha,n,\ell}^\delta:=
\arg\min_{x_n\in \mathbb R^n} \left\{J_{\alpha,n,\ell}(x_n)\right\}.
\end{equation}

For comparison, we also define the Tikhonov functional $J_{\alpha,n}$ obtained by
replacing $\Aml{n}{\ell}$ in \eqref{tik1} by $A_n$, i.e.,
\[
J_{\alpha,n}(x_n):=\|A_n x_n-y_n^\delta\|_2^2+\alpha\|x_n\|_2^2
\]
and solve the minimization problem
\begin{equation}\label{tik2}
x_{\alpha,n}^\delta:=
\arg\min_{x_n\in \mathbb R^n} \left\{J_{\alpha,n}(x_n)\right\}.
\end{equation}

Let us fix $n$. We would like to choose the parameter pair $\{\ell,\alpha\}$ so that
$x_{\alpha,n,\ell}^\delta$ is an accurate approximation of the solution $\widehat x$ of
minimal norm of the operator equation \eqref{Opgl}.

The proper choice of the parameter pair $\{\ell,\alpha\}$ has been studied by Neubauer
\cite{NEUBAUER1988507}, who considers the computation of an approximate solution of an
operator equation
\[
Tx=y^\delta,\qquad T:\widetilde{\mathcal X}\to \widetilde{\mathcal Y},
\]
where $\widetilde{\mathcal X}$ and $\widetilde{\mathcal Y}$ are Hilbert spaces, by first
discretizing and then solving the discretized equation using Tikhonov regularization,
\[
x_{\alpha,\ell}^{h,\delta}:= \left (T_{h,\ell}^\ast T_{h,\ell}+\alpha I \right )^{-1}
T_{h,\ell}^\ast y_n^\delta.
\]
Here $T_{h,\ell}$ denotes a discretization and modification of $T$ (see below), and
$T_{h,\ell}^*$ is the adjoint of $T_{h,\ell}$. Neubauer \cite{NEUBAUER1988507} requires
the operator $T_{h,\ell}$ to satisfy
\begin{eqnarray*}
	\|T-T_{h,\ell}\|_2&\le & h_\ell,\\
	T_{h,\ell}&:=& \Rm{\ell}T_h,\\
	\Rm{\ell}&\to& I \mbox{~~point-wise as $\ell$ increases,}
\end{eqnarray*}
where $\Rm{\ell}$ is an orthogonal projector onto an $\ell$-dimensional subspace
${\mathcal W}_\ell\subset\widetilde{\mathcal Y}$ to be specified below. The dimension
$\ell$ is finite and typically quite small. Moreover, $T_h$ is a discretization of $T$ and
$T_{h,\ell}$ is a modification of $T_h$ determined by $\Rm{\ell}$.

In our application of the results of Neubauer \cite{NEUBAUER1988507}, we let $T:=A_n$ and
$\widetilde{\mathcal X}:=\widetilde{\mathcal Y}:={\mathbb R}^n$. Thus, we set
\begin{eqnarray}
\nonumber
T&:=& \Am{n},\\
\label{approx_qual}
T_{h,\ell}&:=& \Aml{n}{\ell},\hspace{1cm}\|\Am{n}-\Aml{n}{\ell}\|_2\le h_\ell,\\
\nonumber
{\mathcal W}_\ell&:=& \overline{\mathcal{R}(\Aml{n}{\ell})},\\
\nonumber
\Rm{\ell}&:=& P_{\overline{\mathcal{R}(\Aml{n}{\ell})}},
\end{eqnarray}
where $P_{\overline{\mathcal{R}(\Aml{n}{\ell})}}$ denotes the orthogonal projector onto
the (closure of the) range of $\Aml{n}{\ell}$. The operator $T_h$ is not important to
us; we only use $T_{h,\ell}$. We are in a position to show the following results.

\begin{proposition}
Assume that the Arnoldi process does not break down. With the operators defined as above,
we have
\begin{eqnarray}
{\mathcal W}_\ell&\subset& \overline{\mathcal{R}(\Am{n})},\label{rangecond1}\\
\mathcal{R}(\Rm{\ell}\Aml{n}{\ell})&=&{\mathcal W}_\ell,\label{rangecond2}\\
\nonumber
\|\Rm{\ell} (\Am{n}-\Aml{n}{\ell})\|_2&\le & h_\ell,\\
\label{deltabd}
\|\Rm{\ell} (y-y_n^\delta)\|_2&\le &\delta, \\
\Rm{\ell} &\to &I \mbox{ point-wise onto }\mathcal{R}(\Am{n}), \label{pointwise}
\end{eqnarray}
where the bound \eqref{deltabd} is inspired by \eqref{discrp2}.
\end{proposition}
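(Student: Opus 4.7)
The key algebraic observation is that the Arnoldi relation \eqref{arndec} lets us rewrite the low-rank approximation as
\[
\Aml{n}{\ell} = \Vl{n}{\ell+1}\Hl{\ell}\Vlt{n}{\ell} = \Am{n}\Vl{n}{\ell}\Vlt{n}{\ell},
\]
since $\Am{n}\Vl{n}{\ell} = \Vl{n}{\ell+1}\Hl{\ell}$. Because $\Vl{n}{\ell}\Vlt{n}{\ell}$ is the orthogonal projector onto the Krylov subspace $\mathcal{K}_{\ell}(\Am{n},y_n^\delta)$, we immediately obtain $\mathcal{R}(\Aml{n}{\ell})\subset\mathcal{R}(\Am{n})$. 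As $\mathcal{R}(\Am{n})$ is finite-dimensional (hence closed), taking closures yields ${\mathcal W}_\ell = \overline{\mathcal{R}(\Aml{n}{\ell})}\subset\overline{\mathcal{R}(\Am{n})}$, establishing \eqref{rangecond1}.

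For \eqref{rangecond2}, observe that by definition $\Rm{\ell}$ acts as the identity on ${\mathcal W}_\ell = \overline{\mathcal{R}(\Aml{n}{\ell})}$, so $\Rm{\ell}\Aml{n}{\ell}x = \Aml{n}{\ell}x$ for every $x\in\mathbb{R}^n$. Therefore $\mathcal{R}(\Rm{\ell}\Aml{n}{\ell}) = \mathcal{R}(\Aml{n}{\ell}) = {\mathcal W}_\ell$, where the last equality uses finite-dimensionality once more. The two norm bounds then follow from the fact that $\Rm{\ell}$ is an orthogonal projector and therefore has spectral norm at most one: submultiplicativity gives
\[
\|\Rm{\ell}(\Am{n}-\Aml{n}{\ell})\|_2 \le \|\Am{n}-\Aml{n}{\ell}\|_2 \le h_\ell
\]
by \eqref{hbd}, and similarly $\|\Rm{\ell}(y_n-y_n^\delta)\|_2\le\|y_n-y_n^\delta\|_2\le\delta$ by \eqref{discrp2} (where $y$ in the statement is identified with $y_n$ under the conventions of Section \ref{sec2}).

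For the pointwise convergence \eqref{pointwise}, I use that the Krylov subspaces are nested: since $\mathcal{K}_\ell(\Am{n},y_n^\delta)\subset\mathcal{K}_{\ell+1}(\Am{n},y_n^\delta)$, the identity $\Aml{n}{\ell} = \Am{n}\Vl{n}{\ell}\Vlt{n}{\ell}$ implies that $\mathcal{R}(\Aml{n}{\ell}) = \Am{n}\mathcal{K}_\ell(\Am{n},y_n^\delta)$ is also monotonically increasing in $\ell$. By hypothesis the Arnoldi process does not break down before step $n$, so at $\ell = n$ the columns of $\Vl{n}{n}$ form an orthonormal basis of $\mathbb{R}^n$ and hence $\mathcal{R}(\Aml{n}{n}) = \mathcal{R}(\Am{n})$. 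Consequently, for every $z\in\mathcal{R}(\Am{n})$ there exists $\ell_0\le n$ with $\Rm{\ell}z = z$ for all $\ell\ge\ell_0$, which in particular yields $\|\Rm{\ell}z-z\|_2\to 0$.

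The only slightly delicate point is the interpretation of \eqref{pointwise}: in infinite dimensions Neubauer's framework requires genuine pointwise convergence, but in our finite-dimensional discretized setting, the nested Krylov structure together with non-breakdown gives the stronger statement that $\Rm{\ell}$ equals the identity on $\mathcal{R}(\Am{n})$ once $\ell$ is large enough. I expect no significant obstacle; the argument is essentially a bookkeeping exercise once the identity $\Aml{n}{\ell} = \Am{n}\Vl{n}{\ell}\Vlt{n}{\ell}$ has been extracted from \eqref{arndec}.
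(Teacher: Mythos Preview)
Your proof is correct and follows essentially the same route as the paper: both extract the identity $\Aml{n}{\ell}=\Am{n}\Vl{n}{\ell}\Vlt{n}{\ell}$ from the Arnoldi relation to get \eqref{rangecond1}--\eqref{rangecond2}, use $\|\Rm{\ell}\|_2\le 1$ for the two norm bounds, and establish \eqref{pointwise} by noting that (absent breakdown) the Krylov spaces exhaust $\mathbb{R}^n$ so that every $y_n\in\mathcal{R}(\Am{n})$ eventually lies in $\mathcal{R}(\Aml{n}{\ell})$. Your explicit mention of the nestedness of the $\mathcal{R}(\Aml{n}{\ell})$ is a small addition not stated in the paper, but the argument is otherwise the same.
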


\begin{proof}
First note that the ranges of the operators (matrices) $\Am{n}$ and $\Aml{n}{\ell}$ are
closed as they are maps between finite-dimensional spaces. It follows from \eqref{arndec}
and \eqref{ArnoldiApprox} that
\begin{equation}\label{Amnl}
\Aml{n}{\ell}=\Am{n}\Vl{n}{\ell}\Vlt{n}{\ell}
\end{equation}
and, therefore,
\begin{equation*}
%LRCORR
{\mathcal W}_\ell=\mathcal{R}(\Aml{n}{\ell})\subset \mathcal{R}(\Am{n}),
\end{equation*}
i.e., property \eqref{rangecond1} holds. Furthermore,
\begin{equation*}
\mathcal{R}(\Rm{\ell}\Aml{n}{\ell})=
\mathcal{R}(P_{\mathcal{R}(\Aml{n}{\ell})}\Aml{n}{\ell})= \mathcal{R}(\Aml{n}{\ell})
={\mathcal W}_\ell.
\end{equation*}
This establishes \eqref{rangecond2}. Finally, we have
\begin{eqnarray*}
\|\Rm{\ell}(\Am{n}-\Aml{n}{\ell})\|_2&\le & \|\Rm{\ell}\|_2\|\Am{n}-\Aml{n}{\ell}\|_2
\stackrel{\eqref{approx_qual}}{\le} h_\ell, \\
\|\Rm{\ell} (y-y_n^\delta)\|_2&\le & \|\Rm{\ell}\|_2\|y-y_n^\delta\|_2\le \delta.
\end{eqnarray*}

It remains to show \eqref{pointwise}. According to \eqref{rangecond1}, we have
$\mathcal{R}(\Aml{n}{\ell})\subset \mathcal{R}(\Am{n})$. We will show that for every
$y_n\in \mathcal{R}(\Am{n})$, there exists an $\ell\geq 1$ such that
$y_n\in\mathcal{R}(\Aml{n}{\ell})$. Let $x_n\in \mathbb{R}^n$ and define $y_n=\Am{n}x_n$.
Note that $\Vl{n}{\ell}\Vlt{n}{\ell}$ is an orthogonal projector onto the space
$\mathcal{R}(\Vl{n}{\ell})$. Assuming that the Arnoldi process does not break down, there
is an $\ell\geq 1$ (in the worst case, $\ell=n$) such that
$x_n\in\mathcal{R}(\Vl{n}{\ell})$ and, therefore, $\Vl{n}{\ell}\Vlt{n}{\ell}x_n=x_n$. It
follows from \eqref{Amnl} that
\begin{equation*}
	y_n=\Am{n}x_n=\Am{n}\Vl{n}{\ell}\Vlt{n}{\ell}x_n=\Aml{n}{\ell}x_n,
\end{equation*}
i.e., $y_n\in \mathcal{R}(\Aml{n}{\ell})$ and, consequently, $\Rm{\ell}y_n=y_n$. This
shows the point-wise convergence of the projector $\Rm{\ell}$ to $I$ as $\ell$ increases.
\end{proof}

Thus, the requirements of Neubauer \cite[Assumption 2.3]{NEUBAUER1988507} are fulfilled,
and we get the following result from
\cite[Proposition 2.6 and Theorem 3.1]{NEUBAUER1988507}:

\begin{proposition}\label{RateTheorem}
Let $x_n$ be an approximate solution of \eqref{linsys} such that
\begin{eqnarray}\label{smoothness}
x_n &=& (\Am{n}^\ast \Am{n})^\nu v_n,\hspace{1cm}v_n\in \mathcal{N}(\Am{n})^\perp,~
\nu\in [0,1],\\
\|v_n\| &\le& \rho\hspace{2.6cm}\forall n\in\mathbb{N},\label{uniform}
\end{eqnarray}
for some constant $\rho\ge 0$ independent of $n$,
and assume that $\|y_n - \Qm{n}y^\delta\|_2\le \delta$. Let the regularization parameter
$\alpha>0$ satisfy
\begin{equation}\label{discrepancy}
\alpha^3 \left \langle \left (\Aml{n}{\ell}
\left(\Aml{n}{\ell}\right )^\ast+\alpha I\right )^{-3}
\Rm{\ell}y_n^\delta,\Rm{\ell}y_n^\delta\right\rangle = (E\,h_\ell + C\,\delta)^2,
\end{equation}
where the inner product $\langle\cdot,\cdot\rangle$ is defined by \eqref{innerprod} and
the constants $C>1$ and $E>3\|x_n\|_2$ are chosen such that
\begin{equation}\label{chkineq}
0\le E\,h_\ell +C\,\delta\le \|\Rm{\ell}y_n^\delta\|_2.
\end{equation}
Then the associated solution of \eqref{tiksol} satisfies
\begin{equation}\label{errorestimate}
\|x_{\alpha,n,\ell}^\delta - x_n\|_2 =
{\small O}\left((\delta + h_\ell)^{2\nu/(2\nu +1)}\right)+p(l,\nu)
\end{equation}
with
\begin{eqnarray*}
 p(l,\nu)&=&\left\{ \begin{array}{ll}
 0&	\mbox{ if } \nu=0\\
 \gamma_l\|(I-\Rm{\ell})z \|&\mbox{ if } \nu=\frac{1}{2},~\Am{n}^\ast z_n=
 \left (\Am{n}^\ast\Am{n}\right )^{1/2}v_n,\\
	\gamma_l^2\,\|v_n\|_2&\mbox{ if }\nu=1,\\
	(4/\pi)\gamma_l^{2\nu}\|v_n\|&\mbox{ otherwise,}
	\end{array}\right . \\
	\gamma_l&=&\|(I-\Rm{\ell})\Am{n}\|_2^2\,\|v_n\|_2
\end{eqnarray*}
Additionally, $\smallO$ has to be replaced by $\mathcal{O}$ for $\nu=1$.
%\|(I-\Rm{\ell})\Am{n}\|_2^2\,\|v\|_2.
\end{proposition}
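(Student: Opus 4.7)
The plan is to obtain Proposition \ref{RateTheorem} as a direct specialization of Neubauer's \cite[Proposition 2.6 and Theorem 3.1]{NEUBAUER1988507} to our finite-dimensional matrix setting. Since the preceding proposition has already verified Neubauer's Assumption 2.3 for the identifications $T := \Am{n}$, $T_{h,\ell} := \Aml{n}{\ell}$, and $\Rm{\ell} := P_{\overline{\mathcal{R}(\Aml{n}{\ell})}}$, the task reduces to matching the hypotheses of our proposition with those of Neubauer's and then quoting his conclusion.

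First I would verify that \eqref{smoothness}--\eqref{uniform} coincide with Neubauer's H\"older-type source condition $x_n = (T^\ast T)^\nu v_n$ together with the uniformity requirement $\|v_n\|\le \rho$, and that the bound $\|y_n - \Qm{n} y^\delta\|_2 \le \delta$ reproduces Neubauer's noise assumption, which after applying $\Rm{\ell}$ inherits the same bound by \eqref{deltabd}. Next I would recognize the implicit equation \eqref{discrepancy} as Neubauer's generalized discrepancy principle: using $\Rm{\ell}\Aml{n}{\ell} = \Aml{n}{\ell}$ from \eqref{rangecond2}, the left-hand side of \eqref{discrepancy} is exactly the cube of the norm of the Tikhonov residual produced by $\Aml{n}{\ell}$ from the data $\Rm{\ell}y_n^\delta$. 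The compatibility condition \eqref{chkineq} together with $C>1$ and $E>3\|x_n\|_2$ specifies the admissible range of constants in Neubauer's rule and guarantees existence and uniqueness of a positive $\alpha$ solving \eqref{discrepancy}.

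With these identifications in place, Neubauer's Proposition 2.6 yields the convergence rate $\smallO((\delta + h_\ell)^{2\nu/(2\nu+1)})$, while his Theorem 3.1 supplies the additive projection-defect term $p(\ell,\nu)$ with its case split on $\nu \in \{0,1/2,1\}$ versus general $\nu \in (0,1)\setminus\{1/2\}$, together with the sharpening from $\smallO$ to $\mathcal{O}$ at the saturation index $\nu = 1$. The quantity $\gamma_\ell = \|(I-\Rm{\ell})\Am{n}\|_2^2\|v_n\|_2$ is Neubauer's measure of projection defect and is carried over verbatim.

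The main obstacle I foresee is purely bookkeeping: ensuring that the $1/n$-scaled inner product \eqref{innerprod} and its associated norm are used consistently, so that the bound $\|\Am{n}-\Aml{n}{\ell}\|_2 \le h_\ell$, the projector norm of $\Rm{\ell}$, and the noise level $\delta$ all live in the same normed framework as in Neubauer's paper. Once these scaling conventions are fixed and the range relation \eqref{rangecond1} is invoked to place $x_n$ in $\mathcal{N}(\Am{n})^\perp$ compatibly with Neubauer's setup, the proof is a direct citation of the relevant statements from \cite{NEUBAUER1988507}.
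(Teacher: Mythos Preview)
Your proposal is correct and matches the paper's approach exactly: the paper does not give an independent proof of this proposition but simply states it as a consequence of \cite[Proposition~2.6 and Theorem~3.1]{NEUBAUER1988507}, after the preceding proposition has verified Neubauer's Assumption~2.3 for the identifications $T=\Am{n}$, $T_{h,\ell}=\Aml{n}{\ell}$, $\Rm{\ell}=P_{\overline{\mathcal{R}(\Aml{n}{\ell})}}$. Your additional bookkeeping remarks (matching the source condition, the noise bound via \eqref{deltabd}, and the scaled inner product) are appropriate elaborations of what the paper leaves implicit; the only minor slip is the description of the left-hand side of \eqref{discrepancy} as ``the cube of the norm of the Tikhonov residual,'' which is not literally accurate, but this does not affect the argument since you correctly identify it as Neubauer's a~posteriori parameter choice rule.
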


%\begin{proposition}\label{RateTheorem}
%Let $x_n$ be an approximate solution of \eqref{linsys} such that
%\begin{equation}\label{smoothness}
%x_n = (\Am{n}^\ast \Am{n})v,\hspace{1cm}v\in \mathcal{N}(\Am{n})^\perp,
%\end{equation}
%and assume that $\|y_n - \Qm{n}y^\delta\|_2\le \delta$. Let the regularization parameter
%$\alpha>0$ satisfy
%\begin{equation}\label{discrepancy}
%\alpha^3 \left \langle \left (\Aml{n}{\ell}
%\left(\Aml{n}{\ell}\right )^\ast+\alpha I\right )^{-3}
%\Rm{\ell}y_n^\delta,\Rm{\ell}y_n^\delta\right\rangle = (E\,h_\ell + C\,\delta)^2,
%\end{equation}
%where the inner product $\langle\cdot,\cdot\rangle$ is defined by \eqref{innerprod} and
%the constants $C>1$ and $E>3\|x_n\|_2$ are chosen such that
%\begin{equation}\label{chkineq}
%0\le E\,h_\ell +C\,\delta\le \|\Rm{\ell}y_n^\delta\|_2.
%\end{equation}
%Then the associated solution of \eqref{tiksol} satisfies
%\begin{equation}\label{errorestimate}
%\|x_{\alpha,n,\ell}^\delta - x_n\|_2 =
%\mathcal{O}\left((\delta + h_\ell)^{2/3}\right)+\|(I-\Rm{\ell})\Am{n}\|_2^2\,\|v\|_2.
%\end{equation}
%\end{proposition}
~\\
\begin{remark}\label{smoothnessRemark}
The smoothness condition \eqref{smoothness} is for infinite-dimensional problems a fairly
strong restriction. In finite dimension, we observe that \eqref{smoothness} implies that
$x_n\in\mathcal{N}(\Am{n})^\perp$. Therefore, there is a unique
$v_n\in\mathcal{N}(\Am{n})^\perp$ such that $x_n=(\Am{n}^\ast \Am{n})^\nu v_n$. However,
the uniform boundedness of $\|v_n\|$, cf. inequality \eqref{uniform}, generally remains an
open problem; see Proposition \ref{sourcebound} below.
\end{remark}

\begin{remark}
The quantities in \eqref{errorestimate} may depend on $n$. Generally, $h_\ell$ does not
vary much as $\ell$ is kept fixed and $n$ is increased; see Section \ref{sec5} for
illustrations. When $n$ is fixed and $\ell$ increases, $h_\ell$ decreases. We are
interested in choosing $\ell$ large enough so that both terms in the right-hand side of
\eqref{errorestimate} are sufficiently small; see Corollary \ref{cor3} below. Also, the
condition \eqref{chkineq} requires $\ell$ to be large enough.
\end{remark}
~\\

Let us now give an example where the uniform boundedness of the source elements $v_n$,
required in Proposition \ref{RateTheorem}, can be guaranteed:\\

\begin{proposition}\label{sourcebound}
Let the conditions of Proposition \ref{RateTheorem} except for condition \eqref{uniform}
hold. Assume that $A$ is self-adjoint, fulfilling \eqref{InvertabilityCond},
$\Am{n}=\Pm{n} A\Pm{n}$, and the solution $x\in {\mathcal H}^k$ of the equation $Ax=y$
fulfills a source condition with $\nu = 1/2$ and source element
$v\in\mathcal{H}^{\tilde k}$. If $\Am{n}$ is injective, then also the solutions of the
equations $\Am{n}x_n = y_n$ fulfill a source condition with $\nu = 1/2$, and the
associated source elements $v_n$ are uniformly bounded.
\end{proposition}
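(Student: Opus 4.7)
The strategy is to exhibit a natural candidate $v_n$ by construction and then establish the uniform bound via a Hilbert-scale stability estimate for the Galerkin projection $\Am n$. Since $A$ is self-adjoint and $\Pm n$ is an orthogonal projector, $\Am n = \Pm n A \Pm n$ is self-adjoint on $\mathcal{X}_n$, and hence $(\Am n^\ast \Am n)^{1/2} = |\Am n|$. The injectivity of $\Am n$ makes $|\Am n|$ invertible on $\mathcal{X}_n$, so I would take
\[
v_n := |\Am n|^{-1} x_n \in \mathcal{N}(\Am n)^\perp,
\]
which yields the source representation $x_n = (\Am n^\ast \Am n)^{1/2} v_n$ by construction and leaves only the $n$-independent bound $\|v_n\|_2 \le \rho$ to prove.

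Using $|\Am n|^2 = \Am n^2$, the elementary identity $\|v_n\|_2 = \|\Am n^{-1} x_n\|_2$ together with $x_n = \Am n^{-1}\Pm n y = \Am n^{-1}\Pm n A x$ and the continuous source condition $x = |A|v$ reduces the task to bounding $\|\Am n^{-2}\Pm n A|A|v\|_2$ uniformly in $n$. Since $A|A|$ is the functional calculus of $\lambda\mapsto \lambda|\lambda|$ and $\operatorname{sgn}(A)$ is an isometry on $\mathcal{N}(A)^\perp$, we have $\|A|A|v\|_{\mathcal Y} = \|A^2 v\|_{\mathcal Y}$, so it is enough to bound $\|\Am n^{-2}\Pm n z\|_2$ uniformly, where $z := A|A|v$.

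The key technical step is to transfer the Hilbert-scale isomorphism of $A$ to $\Am n$ with constants independent of $n$. In the continuum, \eqref{InvertabilityCond} together with the self-adjointness of $A$ and spectral calculus gives $\|A^2 u\|_{\mathcal Y}\sim\|u\|_{\mathcal H^{-2l}}$ and, by duality in the Hilbert scale, $\|A^{-2} z\|_2\sim\|z\|_{\mathcal H^{2l}}$. The plan is to establish the corresponding uniform Galerkin estimate
\[
\|\Am n^{-2} w\|_2 \le C\,\|w\|_{\mathcal H^{2l}}\qquad\forall\, w\in\mathcal{X}_n,
\]
with $C$ independent of $n$, by an Aubin--Nitsche-type duality argument using the approximation and inverse inequalities on $\{\mathcal{X}_n\}$ imposed in \cite[eq.\,(4.1)--(4.5)]{Natterer1977} (the same structural hypotheses already invoked for \eqref{errbd}). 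Granted this uniform Galerkin stability, together with the $\mathcal H^{2l}$-stability of $\Pm n$ and the continuous isomorphism for $A^2$, the chain
\[
\|v_n\|_2 \le C\,\|\Pm n z\|_{\mathcal H^{2l}} \le C'\,\|z\|_{\mathcal H^{2l}} \le C''\,\|v\|_{\mathcal Y} \le C''\,\|v\|_{\mathcal H^{\tilde k}}
\]
yields the desired $n$-independent bound, where the last embedding is available provided $\tilde k$ is large enough (which is implicit in the assumption $v\in\mathcal H^{\tilde k}$ of the proposition).

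The main obstacle will be the discrete Hilbert-scale estimate on $\Am n^{-2}$. Once this Galerkin stability is in hand, the rest is routine assembly of the continuous isomorphism \eqref{InvertabilityCond}, the stability of the orthogonal projector, and the smoothness assumption on $v$; most of the technical effort will go into the Aubin--Nitsche duality and verifying that the approximation and inverse properties of the subspaces $\{\mathcal{X}_n\}$ suffice to propagate the continuous equivalence at the required order $2l$.
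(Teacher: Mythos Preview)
Your route is quite different from the paper's and leaves the genuinely hard step open. The paper avoids any discrete Hilbert-scale stability analysis for $\Am n^{-2}$ altogether: since $A$ is self-adjoint, the source condition reads $x=Av$, and since $\Am n$ is self-adjoint and injective, $x_n=\Am n v_n$ with $v_n$ unique. The key observation is that $v_n$ is precisely the Galerkin approximation, in the sense of Section~\ref{sec2}, to the solution $v$ of the operator equation $Av=x$, computed from the perturbed right-hand side $x_n$ with data error $\tilde\delta_n=\|x-x_n\|_{\mathcal X}$. Natterer's bound \eqref{optimal_rate} applied once to $Ax=y$ gives $\tilde\delta_n\to 0$, and applied a second time to $Av=x$ (using $v\in\mathcal H^{\tilde k}$) gives $\|v-v_n\|_{\mathcal X}\to 0$; convergence implies uniform boundedness. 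No inverse estimates for $\Am n^{-2}$, no Aubin--Nitsche, no Sobolev stability of $P_n$ are needed.

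Your plan, by contrast, hinges on two steps that are either unproven or problematic. First, the uniform bound $\|\Am n^{-2}w\|_2\le C\|w\|_{\mathcal H^{2l}}$ is not a direct consequence of Natterer's hypotheses; you acknowledge this is ``the main obstacle'' but do not carry it out, and an Aubin--Nitsche argument at order $2l$ would require smoothness and approximation properties beyond what \cite{Natterer1977} assumes. Second, and more seriously, the step $\|\Pm n z\|_{\mathcal H^{2l}}\le C'\|z\|_{\mathcal H^{2l}}$ is in general false: $\Pm n$ is the $L_2$-orthogonal projector, and for typical finite-element or spline spaces $\mathcal X_n$ either $\mathcal X_n\not\subset\mathcal H^{2l}$ (so the left side is undefined) or the projector is not uniformly bounded in the stronger norm. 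This breaks your chain at a point you treat as routine. The paper's bootstrap via \eqref{optimal_rate} sidesteps both difficulties entirely.
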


\begin{proof}
For $\nu=1/2$ and self-adjoint operator $A$, the source condition transfers to
\begin{equation}\label{sourceglSA}
x=(A^\ast A)^{1/2}v = Av.
\end{equation}
As $\Am{n}$ is also self-adjoint, finite-dimensional, and injective, $x_n$ also fulfills a
source condition, see Remark \ref{smoothnessRemark},
\begin{equation*}
x_n=(\Am{n}^\ast \Am{n})^{1/2}v_n = \Am{n}v_n
\end{equation*}
with a unique $v_n$. As $A$ fulfills \eqref{InvertabilityCond}, the distance between $x$
and $x_n$ can be bounded by
\begin{equation*}
\|x-x_n\|_{\mathcal X}\le
C' \| x\|_{{\mathcal H}^k}^{l/(k+l)}\delta_n^{k/(k+l)},
\end{equation*}
see \eqref{optimal_rate}, and $\delta_n = \|y-y_n\|\searrow 0 \mbox{ as }n\to\infty$.
Using again \eqref{optimal_rate} for solving \eqref{sourceglSA} with
$\tilde\delta_n = \|x-x_n\|$, we obtain with $v\in \mathcal{H}^{\tilde k}$,
\begin{equation*}
\|v-v_n\|_{\mathcal X}\le
C' \| v\|_{{\mathcal H}^{\tilde{k}}}^{l/(\tilde{k}+l)}\delta_n^{\tilde{k}/(\tilde{k}+l)},
\end{equation*}
i.e., $v_n\to v$ and consequently $\|v_n\|$ is uniformly bounded.
\end{proof}

The best convergence rates can be achieved for $\nu = 1$:
\begin{corollary}\label{cor3}
Assume that the conditions of Proposition \ref{RateTheorem} hold, let $\nu = 1$, and let
$\hat\alpha$ solve \eqref{discrepancy}. Then for $\ell$ such that
\[
\max \{h_\ell,\|(I-\Rm{\ell})\Aml{n}{\ell}\|_2\}\sim \delta,
\]
we have
\[
\|x_{\hat\alpha,n,\ell}^\delta - x_n\|_2 = \mathcal{O}\left(\delta^{2/3}\right)
\mbox{~~as~~} \delta\searrow 0.
\]
\end{corollary}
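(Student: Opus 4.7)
My plan is to apply Proposition \ref{RateTheorem} directly with $\nu=1$ and then verify that the residual term $p(l,1)$ in \eqref{errorestimate} is absorbed by the $\mathcal O(\delta^{2/3})$ contribution coming from the first summand. Specializing \eqref{errorestimate} to $\nu=1$, and using the remark immediately after the proposition that $\smallO$ becomes $\mathcal O$ in this case, the bound I would start from reads
\[
\|x_{\hat\alpha,n,\ell}^\delta - x_n\|_2 = \mathcal O\bigl((\delta + h_\ell)^{2/3}\bigr) + \gamma_\ell^2\,\|v_n\|_2,
\]
where $\gamma_\ell = \|(I-\Rm{\ell})\Am{n}\|_2^2\,\|v_n\|_2$.

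For the first term I would simply observe that the hypothesis $\max\{h_\ell,\|(I-\Rm{\ell})\Aml{n}{\ell}\|_2\}\sim\delta$ forces $h_\ell\le C\delta$, so $(\delta+h_\ell)^{2/3}\sim\delta^{2/3}$ and this part of the bound is already of the claimed order.

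The main thing to check is that $\gamma_\ell^2\,\|v_n\|_2$ is of strictly smaller order. Here I would exploit that, by construction, $\Rm{\ell}$ is the orthogonal projector onto $\mathcal R(\Aml{n}{\ell})$, so that $(I-\Rm{\ell})\Aml{n}{\ell}=0$ identically — this is the only subtle observation in the proof. Combined with $\|I-\Rm{\ell}\|_2\le 1$ and \eqref{hbd}, the triangle inequality then gives
\[
\|(I-\Rm{\ell})\Am{n}\|_2 \le \|(I-\Rm{\ell})(\Am{n}-\Aml{n}{\ell})\|_2 + \|(I-\Rm{\ell})\Aml{n}{\ell}\|_2 \le h_\ell.
\]
Inserting the uniform source bound \eqref{uniform}, $\|v_n\|_2\le\rho$, I obtain $\gamma_\ell \le h_\ell^2\rho$ and hence $\gamma_\ell^2\,\|v_n\|_2 \le h_\ell^4\rho^3 = \mathcal O(\delta^4)$, which is $o(\delta^{2/3})$ as $\delta\searrow 0$. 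Adding the two estimates yields the asserted rate. Beyond this single observation I do not expect any real obstacle, since the corollary is essentially a specialization of Proposition \ref{RateTheorem} to $\nu=1$, with the vanishing of $(I-\Rm{\ell})\Aml{n}{\ell}$ in finite dimensions simultaneously explaining the form of the hypothesis and automatically supplying the control on $\|(I-\Rm{\ell})\Am{n}\|_2$ needed to dominate $p(l,1)$.
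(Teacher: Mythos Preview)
Your argument is correct and follows the same skeleton as the paper's proof: both apply Proposition~\ref{RateTheorem} with $\nu=1$, use $h_\ell\sim\delta$ to handle the first summand, and bound $\|(I-\Rm{\ell})\Am{n}\|_2$ via the triangle inequality
\[
\|(I-\Rm{\ell})\Am{n}\|_2 \le \|(I-\Rm{\ell})\Aml{n}{\ell}\|_2 + \|(I-\Rm{\ell})(\Am{n}-\Aml{n}{\ell})\|_2
\]
to control $p(l,1)$.

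The one genuine difference is your observation that $(I-\Rm{\ell})\Aml{n}{\ell}=0$ identically, since $\Rm{\ell}$ is by definition the orthogonal projector onto $\mathcal R(\Aml{n}{\ell})$. The paper does not exploit this; it instead keeps $\|(I-\Rm{\ell})\Aml{n}{\ell}\|_2$ as a live term and invokes the hypothesis to bound it by $\delta$, arriving at $\|(I-\Rm{\ell})\Am{n}\|_2\le 2\delta$ and hence $p(l,1)=\mathcal O(\delta^2)$. Your route yields the sharper $\|(I-\Rm{\ell})\Am{n}\|_2\le h_\ell$ directly and shows that the second quantity in the corollary's hypothesis is in fact redundant. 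Either way the residual term is $o(\delta^{2/3})$, so the conclusion is unaffected; your version is simply a cleaner reading of the same estimate.
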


\begin{proof}
The first term on the right-hand side of \eqref{errorestimate} behaves like
$\mathcal{O}(\delta^{3/2})$ if $h_\ell\sim \delta$. For the second term, we have
\begin{eqnarray*}
\|(I-\Rm{\ell})\Am{n}\|_2 &\le & \|(I-\Rm{\ell})\Aml{n}{\ell}\|_2+
\|(I-\Rm{\ell})(\Am{n}-\Aml{n}{\ell})\|_2\\
&\le & \|(I-\Rm{\ell})\Aml{n}{\ell}\|_2+h_\ell.
\end{eqnarray*}
Since $\Rm{\ell}\to I$ as $\ell\to n$, we can choose $\ell$ large enough such that
$\|(I-\Rm{\ell})\Aml{n}{\ell}\|_2\le\delta$ and obtain
\[
\|x_{\hat\alpha,n,\ell}^\delta - x_n\|_2 = \mathcal{O}\left(\delta^{2/3}\right)+
2\delta^2 \|v\|_2 = \mathcal{O}\left (\delta^{2/3}\right)
\]
as $\delta\searrow 0$.
\end{proof}

With the same argument we achieve optimal convergence rates for each $\nu\in (0,1)$ if
$p(l,\nu)=\smallO (\delta^{2\nu /(2\nu +1)})$, which holds for $l$ small enough.\\
Now let us further specify the orthogonal projector $\Rm{\ell}$.\\

\begin{proposition}\label{Projector}
Let $\Aml{n}{\ell}=\Vl{n}{\ell+1}\Hl{\ell}\Vlt{n}{\ell}$ be defined by
\eqref{ArnoldiApprox} and let $\Hl{\ell}=\Ul{\ell+1}\Sl{\ell}\Wlt{\ell}$ denote a
singular value decomposition, i.e., $\Ul{\ell+1}\in{\mathbb R}^{\ell+1,\ell+1}$ and
$\Wlt{\ell}\in{\mathbb R}^{\ell,\ell}$ are orthogonal matrices, whereas
$\Sl{\ell}\in{\mathbb R}^{\ell+1,\ell}$ is a diagonal matrix with nonnegative entries
arranged in nonincreasing order. In particular, all entries of the last row of $\Sl{\ell}$
vanish. Then the projector
$\Rm{\ell}:\mathbb{R}^n\to \Pm{\overline{\mathcal{R}(\Aml{n}{\ell})}}$ is given by
\begin{equation}\label{prp5}
\Rm{\ell}=\Vl{n}{\ell+1}\Ul{\ell+1}\Ikl{q}{\ell+1}\Ult{\ell+1}\Vlt{n}{\ell+1},
\end{equation}
where $\Ikl{q}{\ell+1}\in\mathbb R^{\ell+1,\ell+1}$ is defined in \eqref{Ikl} below and
$q\ge 0$ denotes the rank of the matrix $\Hl{\ell}$.
\end{proposition}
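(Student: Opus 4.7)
The plan is to compute the range of $\Aml{n}{\ell}$ explicitly from the given factorizations and then write down the orthogonal projector onto that range using the fact that $\Vl{n}{\ell+1}\Ul{\ell+1}$ has orthonormal columns.

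First, I would analyze the range of $\Hl{\ell}$ through its SVD. Since $\Sl{\ell}\in\mathbb{R}^{\ell+1,\ell}$ is rectangular diagonal with entries in nonincreasing order, and exactly $q$ of its diagonal entries are nonzero (with the last row identically zero, as noted in the statement), the range of $\Sl{\ell}$ is the span of the first $q$ standard basis vectors of $\mathbb{R}^{\ell+1}$. Because $\Wlt{\ell}$ is orthogonal (hence surjective), $\mathcal{R}(\Sl{\ell}\Wlt{\ell})=\mathcal{R}(\Sl{\ell})$, and therefore $\mathcal{R}(\Hl{\ell})=\Ul{\ell+1}\mathcal{R}(\Sl{\ell})$ is the span of the first $q$ columns of $\Ul{\ell+1}$.

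Next, I would transfer this back to $\Aml{n}{\ell}=\Vl{n}{\ell+1}\Hl{\ell}\Vlt{n}{\ell}$. Since $\Vl{n}{\ell}$ has orthonormal columns, $\Vlt{n}{\ell}:\mathbb{R}^n\to\mathbb{R}^\ell$ is surjective, so $\mathcal{R}(\Aml{n}{\ell})=\Vl{n}{\ell+1}\mathcal{R}(\Hl{\ell})$, which equals the span of the first $q$ columns of $\Vl{n}{\ell+1}\Ul{\ell+1}$. Because $\Vl{n}{\ell+1}$ has orthonormal columns and $\Ul{\ell+1}$ is orthogonal, the product $\Vl{n}{\ell+1}\Ul{\ell+1}$ has orthonormal columns in $\mathbb{R}^n$; in particular, its first $q$ columns form an orthonormal basis of $\mathcal{R}(\Aml{n}{\ell})$.

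Finally, I would invoke the standard formula for the orthogonal projector onto the column span of a matrix with orthonormal columns. Writing $\Ikl{q}{\ell+1}$ for the $(\ell+1)\times(\ell+1)$ diagonal matrix with the first $q$ diagonal entries equal to $1$ and the remaining entries $0$, the projector onto the span of the first $q$ columns of $\Vl{n}{\ell+1}\Ul{\ell+1}$ is precisely
\[
\bigl(\Vl{n}{\ell+1}\Ul{\ell+1}\bigr)\,\Ikl{q}{\ell+1}\,\bigl(\Vl{n}{\ell+1}\Ul{\ell+1}\bigr)^*
=\Vl{n}{\ell+1}\Ul{\ell+1}\Ikl{q}{\ell+1}\Ult{\ell+1}\Vlt{n}{\ell+1},
\]
which is the required expression \eqref{prp5}. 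The argument is mostly a bookkeeping exercise with SVD and orthonormal bases; the only subtle point is tracking how the zero singular values of $\Hl{\ell}$ (and in particular its guaranteed-zero last row) reduce the effective range to the first $q$ columns of $\Ul{\ell+1}$, which is why the selector $\Ikl{q}{\ell+1}$ appears in the middle.
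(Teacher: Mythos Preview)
Your argument is correct, but it takes a different route from the paper's. The paper uses the Moore--Penrose identity $\Pm{\overline{\mathcal{R}(\Aml{n}{\ell})}}=\Aml{n}{\ell}\bigl(\Aml{n}{\ell}\bigr)^{\dagger}$, computes $\bigl(\Aml{n}{\ell}\bigr)^{\dagger}=\Vl{n}{\ell}\Hld{\ell}\Vlt{n}{\ell+1}$ via the SVD of $\Hl{\ell}$, multiplies out, and identifies $\Sl{\ell}\Sld{\ell}=\Ikl{q}{\ell+1}$. You instead identify $\mathcal{R}(\Aml{n}{\ell})$ directly as the span of the first $q$ columns of the orthonormal-column matrix $\Vl{n}{\ell+1}\Ul{\ell+1}$ and then write down the projector from its orthonormal basis. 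Your approach is slightly more elementary in that it avoids pseudoinverses altogether and makes the geometric content (which columns span the range) explicit; the paper's approach has the advantage of producing $\Ikl{q}{\ell+1}=\Sl{\ell}\Sld{\ell}$ as a by-product of a single algebraic computation, which ties the selector matrix directly to the singular values without a separate rank argument.
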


\begin{proof}
It is well known that
\[
\Pm{\overline{\mathcal{R}(\Aml{n}{\ell})}}=\Aml{n}{\ell}
\left(\Aml{n}{\ell}\right)^{\dagger}.
\]
Moreover,
\begin{equation}\label{prpeq1}
\left(\Aml{n}{\ell}\right )^\dagger=
\left(\Vl{n}{\ell+1}\Hl{\ell}\Vlt{n}{\ell}\right )^\dagger =
\Vl{n}{\ell}\Hld{\ell}\Vlt{n}{\ell+1}.
\end{equation}
The singular value decomposition of $\Hl{\ell}$ yields
\begin{equation}\label{prpeq2}
\Hld{\ell}= \Wl{\ell}\Sld{\ell}\Ult{\ell+1}.
\end{equation}
Now using \eqref{prpeq1} and \eqref{prpeq2}, we obtain
\[
\Aml{n}{\ell}\left (\Aml{n}{\ell}\right )^\dagger =
 \Vl{n}{\ell+1}\Ul{\ell+1}\Sl{\ell}\Sld{\ell}\Ult{\ell}\Vlt{n}{\ell+1}.
\]
Finally, when $\Hl{\ell}$ is of rank $q\leq\ell$, we have
%LRCORR
\begin{equation}\label{Ikl}
\Ikl{q}{\ell+1}:=\Sl{\ell}\Sld{\ell}=\left (
	\begin{array}{cc}
	I_q & 0\\
	0& 0
\end{array}\right )\in \mathbb{R}^{\ell+1,\ell+1}
\end{equation}
with $I_q$ being the $q\times q$ identity matrix.
\end{proof}

The use of the discrepancy principle requires the solution of equation
\eqref{discrepancy}. The following result is concerned with the evaluation of the
left-hand side of this equation.

\begin{proposition}
Under the assumptions of Proposition \ref{Projector}, and with the same notation,
it holds
\begin{eqnarray}
\nonumber
    && \left \langle \left (\Aml{n}{\ell}
    \left(\Aml{n}{\ell}\right )^\ast+\alpha I\right )^{-3}
    \Rm{\ell}y_n^\delta\ ,\ \Rm{\ell}y_n^\delta\right\rangle~~~~~~  \\
\nonumber
    && \\
\label{discrepancy_simple}
    &&=\left (\Rm{\ell}y_n^\delta\right)^*\Vl{n}{\ell+1}\Ul{\ell+1}
      \left( \Ll{\ell}+\alpha \Il{\ell+1}\right )^{-3}
      \Ult{\ell+1}\Vlt{n}{\ell+1}\Rm{\ell}y_n^\delta~~~~~~\\
\label{simplified}
 &&=\left(y_n^\delta\right)^* \Vl{n}{\ell+1}\Ul{\ell+1}\Ikl{q}{\ell+1}
      \left( \Ll{\ell}+\alpha \Il{\ell+1}\right )^{-3} \Ikl{q}{\ell+1}
      \Ult{\ell+1}\Vlt{n}{\ell+1}y_n^\delta,~~~~~~
\end{eqnarray}
where $\Ll{\ell}\in\mathbb{R}^{\ell+1,\ell+1}$ is a diagonal matrix made up by the
squares of the singular values of the Hessenberg matrix
$\Hl{\ell}\in\mathbb{R}^{\ell+1,\ell}$ and with the last diagonal entry zero.
\end{proposition}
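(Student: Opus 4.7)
The plan is to diagonalize $\Aml{n}{\ell}\left(\Aml{n}{\ell}\right)^*$ using the SVD of the Hessenberg factor, introduce an orthonormal extension that makes the resulting matrix block-diagonal, and then exploit the fact that $\Rm{\ell}y_n^\delta$ lives in the range spanned by the ``interesting'' block so that the uninteresting block contributes nothing. The second identity \eqref{simplified} will then drop out of the explicit form of $\Rm{\ell}$ given by Proposition~\ref{Projector}.

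First, inserting the SVD $\Hl{\ell}=\Ul{\ell+1}\Sl{\ell}\Wlt{\ell}$ into \eqref{ArnoldiApprox} and using $\Wlt{\ell}\Wl{\ell}=I_\ell$ together with $\Vlt{n}{\ell}\Vl{n}{\ell}=I_\ell$, one obtains
\[
\Aml{n}{\ell}\left(\Aml{n}{\ell}\right)^* = \Vl{n}{\ell+1}\Ul{\ell+1}\Sl{\ell}\Slt{\ell}\Ult{\ell+1}\Vlt{n}{\ell+1} = \Vl{n}{\ell+1}\Ul{\ell+1}\Ll{\ell}\Ult{\ell+1}\Vlt{n}{\ell+1},
\]
with $\Ll{\ell}=\Sl{\ell}\Slt{\ell}$ an $(\ell+1)\times(\ell+1)$ diagonal matrix whose last diagonal entry is zero. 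Setting $Q:=\Vl{n}{\ell+1}\Ul{\ell+1}\in\mathbb{R}^{n,\ell+1}$, the columns of $Q$ are orthonormal (as a product of two matrices with orthonormal columns), so I can extend to an orthonormal basis $[Q,Q_\perp]$ of $\mathbb{R}^n$. In these coordinates the matrix $\Aml{n}{\ell}\left(\Aml{n}{\ell}\right)^*+\alpha I_n$ becomes block-diagonal, which immediately gives
\[
\left(\Aml{n}{\ell}\left(\Aml{n}{\ell}\right)^* + \alpha I_n\right)^{-3}
= Q\,(\Ll{\ell}+\alpha\Il{\ell+1})^{-3}\,Q^* + \alpha^{-3}Q_\perp Q_\perp^{*}.
\]

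Next, by \eqref{rangecond1} the vector $\Rm{\ell}y_n^\delta$ lies in $\mathcal{R}(\Aml{n}{\ell})\subset\mathcal{R}(Q)$, so $Q_\perp^{*}\Rm{\ell}y_n^\delta=0$ and the $Q_\perp$ block drops out of the quadratic form, yielding \eqref{discrepancy_simple}. For \eqref{simplified}, I would substitute the representation $\Rm{\ell}=Q\,\Ikl{q}{\ell+1}\,Q^*$ supplied by Proposition~\ref{Projector}; using $Q^*Q=\Il{\ell+1}$ we get $Q^*\Rm{\ell}=\Ikl{q}{\ell+1}Q^*$, and inserting this on both sides of the inner product converts $(\Rm{\ell}y_n^\delta)^*Q(\cdots)Q^*\Rm{\ell}y_n^\delta$ into the symmetric sandwich expression in \eqref{simplified}.

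The step most prone to error is the bookkeeping of ranges and dimensions: one must check that $Q$ really has orthonormal columns, that $\Ll{\ell}+\alpha\Il{\ell+1}$ is invertible despite $\Ll{\ell}$ having a vanishing last diagonal entry (true because $\alpha>0$), and that $\mathcal{R}(\Aml{n}{\ell})\subset\mathcal{R}(Q)$ so that the $\alpha^{-3}Q_\perp Q_\perp^{*}$ term genuinely contributes nothing. Once these observations are in place, the result is a straightforward block-matrix computation and the appearance of $\Ikl{q}{\ell+1}$ in \eqref{simplified} is simply the record of the fact that $\Aml{n}{\ell}$ has rank $q\leq\ell$, not $\ell+1$.
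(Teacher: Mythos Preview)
Your argument is correct and follows essentially the same route as the paper: diagonalize $\Aml{n}{\ell}(\Aml{n}{\ell})^*$ via the SVD of $\Hl{\ell}$, split $\mathbb{R}^n$ into the range of $Q=\Vl{n}{\ell+1}\Ul{\ell+1}$ and its orthogonal complement (the paper writes this as $I=\Vl{n}{\ell+1}\Vlt{n}{\ell+1}+(I-\Vl{n}{\ell+1}\Vlt{n}{\ell+1})$ rather than extending to $[Q,Q_\perp]$, but that is the same decomposition), and then observe that $\Rm{\ell}y_n^\delta$ has no component in the complement. One small correction: the fact that $\Rm{\ell}y_n^\delta\in\mathcal{R}(\Aml{n}{\ell})$ is immediate from the definition of $\Rm{\ell}$ as the projector onto that range, not from \eqref{rangecond1}, which asserts the unrelated inclusion $\mathcal{W}_\ell\subset\overline{\mathcal{R}(\Am{n})}$.
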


\begin{proof}
We first show \eqref{discrepancy_simple}. Using the notation of Proposition
\ref{Projector}, we obtain
\[
 \left (\Aml{n}{\ell}\right )^\ast = \Vl{n}{\ell} \Hlt{\ell}\Vlt{n}{\ell +1}
    = \Vl{n}{\ell} \Wl{\ell}\Slt{\ell}\Ult{\ell+1}\Vlt{n}{\ell+1},
\]
and taking into account that the matrices $\Ul{\ell}$ and $\Wl{\ell}$ are orthogonal, and
that the matrices $\Vl{n}{\ell}$ and $\Vl{n}{\ell+1}$ have orthonormal columns, yields
\begin{eqnarray*}
  \Aml{n}{\ell}\left (\Aml{n}{\ell}\right )^\ast &=&
    (\Vl{n}{\ell+1}\Ul{\ell+1}\Sl{\ell}\Wlt{\ell}\Vlt{n}{\ell})
  (\Vl{n}{\ell} \Wl{\ell}\Slt{\ell}\Ult{\ell+1}\Vlt{n}{\ell+1})\\
  &=& \Vl{n}{\ell+1}\Ul{\ell+1}\Sl{\ell}\Slt{\ell}\Ult{\ell+1}\Vlt{n}{\ell+1}\\
  &=&  \Vl{n}{\ell+1}\Ul{\ell+1}\Ll{\ell}\Ult{\ell+1}\Vlt{n}{\ell+1},
\end{eqnarray*}
where
\begin{equation*}
  \Ll{\ell}:= {\rm diag}\left(\sigma_1^2,\sigma_2^2,\dots,\sigma_\ell^2,0\right)
  \in\mathbb{R}^{\ell+1,\ell+1}
\end{equation*}
and $\sigma_1\ge\sigma_2\ge\ldots\ge\sigma_\ell\ge 0$ are the singular values of the
matrix $\Hl{\ell}$. We obtain
\[
\begin{array}{rcl}
 \Aml{n}{\ell}\left (\Aml{n}{\ell}\right )^\ast+\alpha I
 &=& \Vl{n}{\ell+1}\Ul{\ell+1}\left (\Ll{\ell}+\alpha \Il{\ell+1}\right)
 \Ult{\ell+1}\Vlt{n}{\ell+1} \\
 &&+\alpha(I-\Vl{n}{\ell+1}\Vlt{n}{\ell+1}).
\end{array}
\]
Since $\Vl{n}{\ell+1}\Vlt{n}{\ell+1}$ and $I-\Vl{n}{\ell+1}\Vlt{n}{\ell+1}$ are
complementary orthogonal projectors, it follows that
\[
\begin{array}{rcl}
 \left(\Aml{n}{\ell}\left (\Aml{n}{\ell}\right )^\ast+\alpha I\right)^3
 &=& \Vl{n}{\ell+1}\Ul{\ell+1}\left(\Ll{\ell}+\alpha\Il{\ell+1}\right)^3
 \Ult{\ell+1}\Vlt{n}{\ell+1} \\
 &&+\alpha^3(I-\Vl{n}{\ell+1}\Vlt{n}{\ell+1}).
\end{array}
\]
Introduce the vector
\[
z_n^\delta:=\Vl{n}{\ell+1}\Ul{\ell+1}\left(\Ll{\ell}+\alpha\Il{\ell+1}\right)^{-3}
\Ult{\ell+1}\Vlt{n}{\ell+1}R_\ell y_n^\delta.
\]
Then
\[
\left(\Aml{n}{\ell}\left (\Aml{n}{\ell}\right )^\ast+\alpha I\right)^3 z_n^\delta =
R_\ell y_n^\delta
\]
and, therefore,
\[
\left(\Aml{n}{\ell}\left(\Aml{n}{\ell}\right )^\ast+\alpha I\right)^{-3} R_\ell y_n^\delta
=z_n^\delta.
\]
This shows \eqref{discrepancy_simple}. Equation \eqref{simplified} now follows by
substituting \eqref{prp5} into \eqref{discrepancy_simple}.
\end{proof}

In actual computations, the matrix $\Hl{\ell}$ typically is small; see Section \ref{sec4}
for illustrations. The singular value decomposition of $\Hl{\ell}$ therefore is quite
inexpensive to compute and the left-hand side of \eqref{discrepancy_simple} easily can be
evaluated.

\begin{corollary}
Let the conditions in Section \ref{sec2} hold and choose $n$ according to \eqref{nofh}.
Assume that $1\le\ell\le n$ is large enough so that \eqref{discrepancy} has a solution,
which we denote by $\hat{\alpha}$. Consider the regularized solution
$x_{\hat{\alpha},n,\ell}^\delta$, defined by \eqref{tiksol} with $\alpha=\hat{\alpha}$,
an element in ${\mathcal X}_n$. Assume that the conditions of Corollary \ref{cor3} hold.
Then
\begin{equation}\label{solerrbd}
\|\widehat{x}-x_{\hat{\alpha},n,\ell}^\delta\|_{\mathcal X} \le
C'\|\widehat x\|_{{\mathcal H}^k}^{l/(k+l)}\delta^{k/(k+l)}+{\mathcal O}(\delta^{2/3})
\mbox{~~as~~} \delta\searrow 0
\end{equation}
for a suitable constant $C'>0$ with the parameter $l$ the same as in
\eqref{InvertabilityCond}.
\end{corollary}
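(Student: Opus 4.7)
The plan is to combine the two already-proved pieces via the triangle inequality. The total error $\|\widehat{x} - x_{\hat\alpha,n,\ell}^\delta\|_{\mathcal X}$ splits naturally as the discretization error $\|\widehat{x} - x_n\|_{\mathcal X}$ (where $x_n$ is the minimum-norm least-squares solution of \eqref{linsys}) plus the Arnoldi--Tikhonov approximation error $\|x_n - x_{\hat\alpha,n,\ell}^\delta\|_{\mathcal X}$.

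First I would handle the discretization piece. Since $n$ is chosen according to \eqref{nofh} and the assumptions of Section \ref{sec2} are in force, Natterer's bound \eqref{optimal_rate} applies directly and yields
\[
\|\widehat{x} - x_n\|_{\mathcal X} \le C' \|\widehat{x}\|_{{\mathcal H}^k}^{l/(k+l)}\delta^{k/(k+l)},
\]
which is exactly the first term on the right-hand side of \eqref{solerrbd}. This step is essentially a quotation; nothing new is required beyond observing that the hypotheses of Section \ref{sec2} are included in the assumptions of the corollary.

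Next I would handle the regularization piece. Since the hypotheses of Corollary \ref{cor3} are assumed to hold (in particular, $\nu=1$ and an $\ell$ large enough that $\max\{h_\ell,\|(I-R_\ell)A_n^{(\ell)}\|_2\}\sim \delta$ together with $\hat\alpha$ solving \eqref{discrepancy}), Corollary \ref{cor3} gives
\[
\|x_n - x_{\hat\alpha,n,\ell}^\delta\|_2 = {\mathcal O}(\delta^{2/3})\quad \text{as } \delta\searrow 0.
\]
This bound is in the Euclidean norm on ${\mathbb R}^n$, so I would pass to the $\mathcal{X}$-norm using the norm-equivalence \eqref{cob}, giving $\|x_n - x_{\hat\alpha,n,\ell}^\delta\|_{\mathcal X} \le c_{\max}\|x_n - x_{\hat\alpha,n,\ell}^\delta\|_2 = {\mathcal O}(\delta^{2/3})$. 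The constant $c_{\max}$ is independent of $n$ by assumption, so it can be absorbed into the ${\mathcal O}$-constant.

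Finally I would add the two bounds by the triangle inequality
\[
\|\widehat{x} - x_{\hat\alpha,n,\ell}^\delta\|_{\mathcal X}
\le \|\widehat{x}-x_n\|_{\mathcal X} + \|x_n - x_{\hat\alpha,n,\ell}^\delta\|_{\mathcal X},
\]
producing exactly \eqref{solerrbd}. There is no real obstacle here since both ingredients are already proved; the only subtlety worth stating explicitly is that the ${\mathcal O}(\delta^{2/3})$ term from Corollary \ref{cor3} implicitly assumes $\ell$ is taken large enough (and possibly growing with $\delta\searrow 0$) so that $h_\ell\sim\delta$ and $\|(I-R_\ell)A_n^{(\ell)}\|_2\lesssim\delta$, and that the norm-equivalence constant $c_{\max}$ from \eqref{cob} is uniform in $n$, so that the passage from the Euclidean norm to the $\mathcal{X}$-norm does not spoil the rate as $n$ is refined according to \eqref{nofh}.
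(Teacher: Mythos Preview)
Your proposal is correct and follows essentially the same approach as the paper: split by the triangle inequality into the discretization error (bounded via \eqref{optimal_rate}) and the Arnoldi--Tikhonov error (bounded via Corollary~\ref{cor3}), then use the norm equivalence \eqref{cob} to pass from the Euclidean norm to the $\mathcal{X}$-norm. Your write-up is in fact slightly cleaner than the paper's, which applies \eqref{cob} to $\widehat{x}-x_{\hat\alpha,n,\ell}^\delta$ rather than to $x_n-x_{\hat\alpha,n,\ell}^\delta$ where it is actually needed.
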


\begin{proof}
Let $x_n\in{\mathcal X}_n$ be the minimal-norm solution \eqref{nbasis} of \eqref{linsys}
with $n$ chosen according to \eqref{nofh}. Then we obtain by the triangle inequality and
\eqref{optimal_rate} that
\begin{eqnarray*}
\|\widehat{x}-x_{\hat{\alpha},n,\ell}^\delta\|_{\mathcal X}&\le&
\|\widehat{x}-x_n\|_{\mathcal X}+\|x_n-x_{\hat{\alpha},n,\ell}^\delta\|_{\mathcal X}\\
&\le& C'\|\widehat x\|_{{\mathcal H}^k}^{l/(k+l)}\delta^{k/(k+l)}+
\|x_n-x_{\hat{\alpha},n,\ell}^\delta\|_{\mathcal X}.
\end{eqnarray*}
Now considering $x_n$ and $x_{\hat{\alpha},n,\ell}^\delta$ elements in ${\mathbb R}^n$, we
obtain from \eqref{cob} that
\[
c_{\min}\|\widehat{x}-x_{\hat{\alpha},n,\ell}^\delta\|_2\leq
\|\widehat{x}-x_{\hat{\alpha},n,\ell}^\delta\|_{\mathcal X}\leq
c_{\max}\|\widehat{x}-x_{\hat{\alpha},n,\ell}^\delta\|_2.
\]
The inequality \eqref{solerrbd} now follows from Corollary \ref{cor3}.
\end{proof}

\begin{remark}\label{rm4.10}
We conclude this section with a comment on why our analysis requires results by both
Natterer \cite{Natterer1977} and Neubauer \cite{NEUBAUER1988507}, because it may appear
more natural to choose $T_{h,\ell}=\Aml{n}{\ell}$ and apply Neubauer's result, only,
without invoking those of Natterer. Our reason for using the bounds provided by Natterer
is that in order to be able to use the results of Neubauer, without applying those of
Natterer, we need a bound for
\[
\|A-\Aml{n}{\ell}\|,
\]
where we consider $\Aml{n}{\ell}$ an operator from ${\mathcal X}$ to ${\mathcal Y}$ and
$\|\cdot\|$ denotes the appropriate operator norm. For many standard discretizations with
suitable basis functions such a bound can be determined. However, this is not the case for
the Arnoldi approximation $\Aml{n}{\ell}$, as the Arnoldi process depends on the starting
vector. Therefore, we need a discrete approximation $\Am{n}$ of $A$ so that we are able to
evaluate
\[
\|\Am{n}-\Aml{n}{\ell}\|_2
\]
numerically. Here $\Aml{n}{\ell}$ is considered a matrix. The application of the Arnoldi
process to $\Am{n}$ gives an approximation of the solution of the \it{discretized
equation}. Natterer's bounds are required to bound the distance to the solution of the
infinite-dimensional problem.
\end{remark}

\section{Computed examples}\label{sec5}
We apply the Arnoldi--Tikhonov method to a few ill-posed operator equations and illustrate
the influence of different discretizations. All computations were carried out using MATLAB
with about 15 significant decimal digits.

{\small
\begin{table}[ht]
\centering
\begin{tabular}{cccccc}
$n$ & $\ell$ & $h_\ell$ & $\alpha$ & $\|x_{\alpha,n,\ell}^\delta-x_n\|_2/\|x_n\|_2$ &
$\|x_{\alpha,n,\ell}^\delta-x_{\alpha,n}^\delta\|_2/\|x_n\|_2$ \\ \hline
$1000$ & $20$ & $1.14\cdot 10^{-1}$ & $4.90$ & $2.28\cdot 10^{-1}$ & $3.64\cdot 10^{-4}$\\
$1000$ & $30$ & $1.13\cdot 10^{-1}$ & $4.96$ & $2.28\cdot 10^{-1}$ & $3.60\cdot 10^{-4}$\\
$1000$ & $40$ & $1.12\cdot 10^{-1}$ & $4.90$ & $2.26\cdot 10^{-1}$ & $3.60\cdot 10^{-4}$\\
[1mm]
$2000$ & $20$ & $8.15\cdot 10^{-2}$ & $3.82$ & $1.95\cdot 10^{-1}$ & $3.73\cdot 10^{-4}$\\
$2000$ & $30$ & $8.13\cdot 10^{-2}$ & $3.81$ & $1.94\cdot 10^{-1}$ & $3.69\cdot 10^{-4}$\\
$2000$ & $40$ & $8.06\cdot 10^{-2}$ & $3.79$ & $1.94\cdot 10^{-1}$ & $3.67\cdot 10^{-4}$\\
[1mm]
$4000$ & $20$ & $5.78\cdot 10^{-2}$ & $2.97$ & $1.68\cdot 10^{-1}$ & $3.25\cdot 10^{-4}$\\
$4000$ & $30$ & $5.77\cdot 10^{-2}$ & $2.97$ & $1.67\cdot 10^{-1}$ & $3.24\cdot 10^{-4}$\\
$4000$ & $40$ & $5.75\cdot 10^{-2}$ & $2.96$ & $1.67\cdot 10^{-1}$ & $3.23\cdot 10^{-4}$\\
\end{tabular}
\caption{Example 5.1: The {\sf phillips} test problem. The noise level \eqref{nl} is
$1\%$.}\label{tab1}
\end{table}}

{\small
\begin{table}[ht]
\centering
\begin{tabular}{cccccc}
$n$ & $\ell$ & $h_\ell$ & $\alpha$ & $\|x_{\alpha,n,\ell}^\delta-x_n\|_2/\|x_n\|_2$ &
$\|x_{\alpha,n,\ell}^\delta-x_{\alpha,n}^\delta\|_2/\|x_n\|_2$ \\ \hline
$1000$ & $20$ & $1.14\cdot 10^{-1}$ & $4.44$ & $2.13\cdot 10^{-1}$ & $4.20\cdot 10^{-4}$\\
$1000$ & $30$ & $1.13\cdot 10^{-1}$ & $4.42$ & $2.12\cdot 10^{-1}$ & $4.16\cdot 10^{-4}$\\
$1000$ & $40$ & $1.12\cdot 10^{-1}$ & $4.36$ & $2.11\cdot 10^{-1}$ & $4.14\cdot 10^{-4}$\\
[1mm]
$2000$ & $20$ & $8.15\cdot 10^{-2}$ & $3.27$ & $1.77\cdot 10^{-1}$ & $4.28\cdot 10^{-4}$\\
$2000$ & $30$ & $8.13\cdot 10^{-2}$ & $3.26$ & $1.77\cdot 10^{-1}$ & $4.22\cdot 10^{-4}$\\
$2000$ & $40$ & $8.06\cdot 10^{-2}$ & $3.24$ & $1.76\cdot 10^{-1}$ & $4.20\cdot 10^{-4}$\\
[1mm]
$4000$ & $20$ & $5.78\cdot 10^{-2}$ & $2.42$ & $1.48\cdot 10^{-1}$ & $3.69\cdot 10^{-4}$\\
$4000$ & $30$ & $5.77\cdot 10^{-2}$ & $2.41$ & $1.47\cdot 10^{-1}$ & $3.67\cdot 10^{-4}$\\
$4000$ & $40$ & $5.75\cdot 10^{-2}$ & $2.41$ & $1.47\cdot 10^{-1}$ & $3.66\cdot 10^{-4}$\\
\end{tabular}
\caption{Example 5.1: The {\sf phillips} test problem. The noise level \eqref{nl} is
$0.1\%$.}\label{tab2}
\end{table}}

Example 5.1. Consider the Fredholm integral equation of the first kind discussed by
Phillips \cite{Ph},
\begin{equation}\label{phlps}
\int_{-6}^{6}\kappa(s,t)x(t)dt = g(t), \qquad -6\leq s\leq 6,
\end{equation}
where the solution $x(t)$, kernel $\kappa(s,t)$, and right-hand side $y(s)$ are given by
\begin{eqnarray}
\label{solut}
 x(t) &=& \left\{
    \begin{array}{ll} \vspace{0.2cm}
       1+\cos\left(\frac{\pi t}{3}\right), & \quad |t|<3, \\
       0, & \quad |t| \geq 3,
    \end{array}
 \right. \\
\nonumber
\kappa(s,t) &=& x(s-t), \\
\nonumber
y(s)&=&(6-|s|)\left(1 + \frac{1}{2}\cos\left(\frac{\pi s}{3}\right) \right) +
\frac{9}{2\pi}\sin\left(\frac{\pi |s|}{3} \right).
\end{eqnarray}
We discretize this integral equation by a Nystr\"om method based on the composite
trapezoidal rule with $n$ nodes. This yields a nonsymmetric matrix
$A_n\in{\mathbb R}^{n,n}$. The vector $x_n\in{\mathbb R}^n$ is a discretization of
the exact solution \eqref{solut}. We define the associated right-hand side $y_n=A_nx_n$,
which is assumed to be unknown. An associated contaminated right-hand side,
$y_n^\delta\in{\mathbb R}^n$, which is assumed to be known, is obtained by adding a vector
$e_n\in{\mathbb R}^n$ with normally distributed random entries with mean zero, that models
``noise,'' to $y_n$. The noise vector $e_n$ is scaled to correspond to a prescribed noise
level
\begin{equation}\label{nl}
\nu=\frac{\|e_n\|_2}{\|y_n\|_2}.
\end{equation}
We will use $\delta=\nu\|y_n\|_2$ when determining the regularization parameter
$\alpha$ by solving \eqref{discrepancy}.

Application of $\ell$ steps of the Arnoldi process to the matrix $A_n$ with initial vector
$v_1=y^\delta/\|y^\delta\|_2$ yields the decomposition \eqref{arndec}, as well as the
low-rank approximation $\Aml{n}{\ell}$ of $A_n$ defined by \eqref{ArnoldiApprox}. Table
\ref{tab1} displays the approximation error
\begin{equation}\label{hell}
h_\ell=\|A_n-\Aml{n}{\ell}\|_2;
\end{equation}
cf. \eqref{hbd}.

We determine the regularization parameter $\alpha$ by solving \eqref{discrepancy} with
$E=3\|x_n\|_2$ and $C=1$, as suggested by Proposition \ref{RateTheorem}, and then
solve the regularized problem \eqref{tiksol} with the low-rank matrix $\Aml{n}{\ell}$ for
$x_{\alpha,n,\ell}^\delta$. The inequality \eqref{chkineq} holds for all examples in this
section. Table \ref{tab1} shows the relative error
$\|x_{\alpha,n,\ell}^\delta-x_n\|_2/\|x_n\|_2$. This error depends both on the error in
$y_n^\delta$ and on the approximation error \eqref{hell}. For fixed $n$, the approximation
error \eqref{hell} is seen to decrease as $\ell$ increases in Table \ref{tab1}.

Let $x_{\alpha,n}^\delta$ denote the solution of the regularized problem \eqref{tik2} with
the matrix $A_n$. We are interested in how much the replacement of $A_n$ by the low-rank
approximation $\Aml{n}{\ell}$ affects the quality of the computed solution. Therefore, we
tabulate the normalized difference
$\|x_{\alpha,n,\ell}^\delta-x_{\alpha,n}^\delta\|_2/\|x_n\|_2$. Table \ref{tab1} shows this
difference to be much smaller than $\|x_{\alpha,n,\ell}^\delta-x_n\|_2/\|x_n\|_2$.
Hence, the use of $\Aml{n}{\ell}$ instead of $A_n$, with a fixed value of $\alpha$, does
not affect the quality of the computed solution significantly.

Table \ref{tab1} shows results for different problem sizes, $n\in\{1000,2000,4000\}$, and
noise level $1\%$. The quality of the computed solution $x_{\alpha,n,\ell}^\delta$ is seen
not to be very sensitive to the problem size $n$ or to the number of steps $\ell$ carried
out with the Arnoldi process.  For $n$ fixed, Table \ref{tab1} shows $h_\ell$ to decrease
as $\ell$ increases. Also the relative error
$\|x_{\alpha,n,\ell}^\delta-x_n\|_2/\|x_n\|_2$ can be seen to decrease slowly as $\ell$
increases. Moreover, the error decreases when $n$ increases and $\ell$ is kept fixed.

The quality of the computed solution is, of course, sensitive to the noise level. This is
illustrated by Table \ref{tab2}, which shows results for noise level $0.1\%$. The
$\alpha$-values of Table \ref{tab2} are smaller than of Table \ref{tab1}, as can be
expected. Moreover, the relative errors $\|x_{\alpha,n,\ell}^\delta-x_n\|_2/\|x_n\|$
reported in Table \ref{tab2} are smaller than the corresponding errors of Table
\ref{tab1}.

{\small
\begin{table}[ht]
\centering
\begin{tabular}{cccc}
$n$ & $\ell$ & $\sigma_1^{(\ell)}$ & $\sigma_\ell^{(\ell)}$\\
\hline
$1000$ & $20$ & $5.80$ & $2.44\cdot 10^{-4}$\\
$1000$ & $30$ & $5.80$ & $9.36\cdot 10^{-5}$\\
$1000$ & $40$ & $5.80$ & $3.37\cdot 10^{-5}$\\
[1mm]
$2000$ & $20$ & $5.80$ & $2.26\cdot 10^{-4}$\\
$2000$ & $30$ & $5.80$ & $6.44\cdot 10^{-5}$\\
$2000$ & $40$ & $5.80$ & $2.11\cdot 10^{-5}$\\
[1mm]
$4000$ & $20$ & $5.80$ & $1.99\cdot 10^{-4}$\\
$4000$ & $30$ & $5.80$ & $3.38\cdot 10^{-5}$\\
$4000$ & $40$ & $5.80$ & $1.54\cdot 10^{-5}$\\
\end{tabular}
\caption{Example 5.1: The {\sf phillips} test problem. Largest and smallest singular
values $\sigma_1^{(\ell)}\geq\ldots\geq\sigma_\ell^{(\ell)}$ of the matrices
$H_{\ell+1,\ell}$ in the definition \eqref{ArnoldiApprox} of the approximations
$\Aml{n}{\ell}$ of $A_n$ used in Table \ref{tab1}.}\label{tab3}
\end{table}}

We would like the $\ell$th singular value of $\Aml{n}{\ell}$, i.e., of $H_{\ell+1,\ell}$,
to be much smaller than the first one (the largest singular value). Then $\Aml{n}{\ell}$
captures all essential properties of $A_n$. To illustrate that this is the case, we
display in Table \ref{tab3} the largest and smallest singular values, $\sigma_1^{(\ell)}$
and $\sigma_\ell^{(\ell)}$, respectively, of the matrix $H_{\ell+1,\ell}$ in the
definition \eqref{ArnoldiApprox} of $\Aml{n}{\ell}$. The table shows singular values for
the matrices $H_{\ell+1,\ell}$ determined for Table \ref{tab1}. The size of the largest
singular value is seen to be independent of $\ell$, while the smallest singular value
decreases slowly as $\ell$ and $n$ increase. ~~~$\Box$

{\small
\begin{table}[ht]
\centering
\begin{tabular}{cccccc}
$n$ & $\ell$ & $h_\ell$ & $\alpha$ &
$\frac{\|x_{\alpha,n,\ell}^\delta-x_n\|_2}{\|x_n\|_2}$ &
$\frac{\|x_{\alpha,n,\ell}^\delta-x_{\alpha,n}^\delta\|_2}{\|x_n\|_2}$ \\[1mm] \hline
$1000$ & $20$ & $1.76\cdot 10^{-2}$ & $1.48\cdot 10^{0\phantom{-}}$ & $1.10\cdot 10^{-1}$
& $1.39\cdot 10^{-14}$\\
$1000$ & $30$ & $5.40\cdot 10^{-3}$ & $9.92\cdot 10^{-1}$ & $8.65\cdot 10^{-2}$ &
$1.87\cdot 10^{-14}$\\
$1000$ & $40$ & $2.38\cdot 10^{-3}$ & $7.94\cdot 10^{-1}$ & $8.58\cdot 10^{-2}$ &
$2.29\cdot 10^{-14}$\\
[1mm]
$2000$ & $20$ & $1.76\cdot 10^{-2}$ & $1.49\cdot 10^{0\phantom{-}}$ & $1.10\cdot 10^{-1}$
& $1.76\cdot 10^{-14}$\\
$2000$ & $30$ & $5.39\cdot 10^{-3}$ & $9.98\cdot 10^{-1}$ & $8.70\cdot 10^{-2}$ &
$2.50\cdot 10^{-14}$\\
$2000$ & $40$ & $2.21\cdot 10^{-3}$ & $8.63\cdot 10^{-1}$ & $7.99\cdot 10^{-2}$ &
$2.81\cdot 10^{-14}$\\
[1mm]
$4000$ & $20$ & $1.80\cdot 10^{-2}$ & $1.50\cdot 10^{0\phantom{-}}$ & $1.11\cdot 10^{-1}$
& $2.60\cdot 10^{-14}$\\
$4000$ & $30$ & $5.80\cdot 10^{-3}$ & $1.02\cdot 10^{0\phantom{-}}$ & $8.83\cdot 10^{-2}$
& $3.73\cdot 10^{-14}$\\
$4000$ & $40$ & $2.57\cdot 10^{-3}$ & $8.83\cdot 10^{-1}$ & $8.11\cdot 10^{-2}$
& $4.38\cdot 10^{-14}$\\
\end{tabular}
\caption{Example 5.2: The {\sf phillips} test problem. The noise level \eqref{nl} is
$1\%$.}\label{tab1b}
\end{table}}

{\small
\begin{table}[ht]
\centering
\begin{tabular}{cccccc}
$n$ & $\ell$ & $h_\ell$ & $\alpha$ &
$\frac{\|x_{\alpha,n,\ell}^\delta-x_n\|_2}{\|x_n\|_2}$ &
$\frac{\|x_{\alpha,n,\ell}^\delta-x_{\alpha,n}^\delta\|_2}{\|x_n\|_2}$ \\[1mm] \hline
$1000$ & $20$ & $1.76\cdot 10^{-2}$ & $8.73\cdot 10^{-1}$ & $8.02\cdot 10^{-2}$
& $2.16\cdot 10^{-14}$\\
$1000$ & $30$ & $5.40\cdot 10^{-3}$ & $3.55\cdot 10^{-1}$ & $4.73\cdot 10^{-2}$
& $5.05\cdot 10^{-14}$\\
$1000$ & $40$ & $2.40\cdot 10^{-3}$ & $2.21\cdot 10^{-1}$ & $3.66\cdot 10^{-2}$
& $8.52\cdot 10^{-14}$\\
[1mm]
$2000$ & $20$ & $1.75\cdot 10^{-2}$ & $8.71\cdot 10^{-1}$ & $8.01\cdot 10^{-2}$
& $2.91\cdot 10^{-14}$\\
$2000$ & $30$ & $5.39\cdot 10^{-3}$ & $3.55\cdot 10^{-1}$ & $4.73\cdot 10^{-2}$
& $6.63\cdot 10^{-14}$\\
$2000$ & $40$ & $2.22\cdot 10^{-3}$ & $2.14\cdot 10^{-1}$ & $3.60\cdot 10^{-2}$
& $1.09\cdot 10^{-13}$\\
[1mm]
$4000$ & $20$ & $1.76\cdot 10^{-2}$ & $8.73\cdot 10^{-1}$ & $8.03\cdot 10^{-2}$
& $4.45\cdot 10^{-14}$\\
$4000$ & $30$ & $5.78\cdot 10^{-3}$ & $3.73\cdot 10^{-1}$ & $4.86\cdot 10^{-2}$
& $9.99\cdot 10^{-14}$\\
$4000$ & $40$ & $2.57\cdot 10^{-3}$ & $2.31\cdot 10^{-1}$ & $3.74\cdot 10^{-2}$
& $1.63\cdot 10^{-13}$\\
\end{tabular}
\caption{Example 5.2: The {\sf phillips} test problem. The noise level \eqref{nl} is
$0.1\%$.}\label{tab2b}
\end{table}}

Example 5.2. This example also considers the integral equation \eqref{phlps}, but uses a
different discretization. The discretization is computed with the MATLAB function
{\sf phillips} from Regularization Tools by Hansen \cite{Ha}. This function uses a
Galerkin method with $n$ orthonormal box functions as test and trial functions and yields
a symmetric indefinite matrix $A_n\in{\mathbb R}^{n\times n}$. The vector
$x_n\in{\mathbb R}^n$ is a scaled discretization of the exact solution \eqref{solut}.
Since the matrix $A_n$ is symmetric, the Arnoldi process (Algorithm \ref{alg:arnoldi})
simplifies to the Lanczos process. Table \ref{tab1b} is analogous to Table \ref{tab1} and
shows results for the noise level \eqref{nl} $1\%$. Results for noise level $0.1\%$ are
displayed in Table \ref{tab2b}, which is analogous to Table \ref{tab2}. Due to the different
scaling of matrices and right-hand sides in this and the previous examples, the quantities
$h_\ell$ and $\alpha$ will differ. However, the relative errors tabulated in the last two
columns are comparable, and it is clear that the Galerkin method of the present example
furnishes more accurate approximations of $x_n$ than the Nystr\"om discretization of
Example 5.1. The exact solution $x_n$ and the computed approximation
$x_{\alpha,n,\ell}^\delta$ for $n=2000$, $\ell=30$, and $\nu=1\cdot 10^{-2}$, are shown
in Figure \ref{fig1}.
~~~$\Box$

\begin{figure}
\centering
\includegraphics[width=4in]{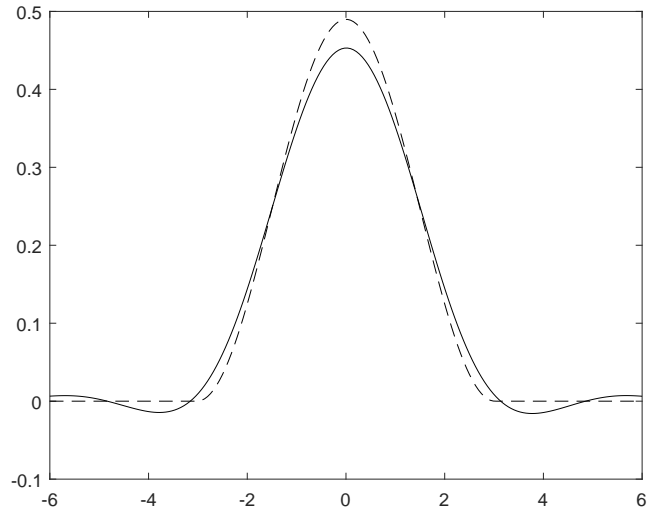}
\caption{Example 5.2: Exact solution $x_n$ (dashed curve) and computed solution
$x_{\alpha,n,\ell}^\delta$ (continuous curve) for $n=2000$, $\ell=30$, and noise
level $\nu=1\%$.}\label{fig1}
\end{figure}

Example 5.3. We turn to the Fredholm integral equation of the first kind discussed by
Baart \cite{Ba},
\[
\int_0^{\pi} \kappa(s,t)x(t)dt=g(s),\qquad 0\le s\le \frac{\pi}{2},
\]
where $\kappa(s,t)=\exp(s\cos(t))$ and $g(s)=2\sinh(s)/s$. The solution is given by
$x(t)=\sin(t)$. We discretize this integral equation by a Galerkin method using $n$
orthonormal box functions as test and trial functions. The discretization is computed
with the MATLAB function {\sf baart} from  \cite{Ha} and gives a nonsymmetric matrix
$A_n\in{\mathbb R}^{n,n}$ and a vector $x_n\in{\mathbb R}^n$ that is a scaled
discretization of the exact solution. Similarly as in Example 5.1, we define the ``unknown''
exact right-hand side by $y_n=A_nx_n$, and obtain the associated contaminated right-hand
side $y_n^\delta\in{\mathbb R}^n$, which is assumed to be known, by adding a vector
$e_n\in{\mathbb R}^n$ with normally distributed entries with zero mean to $y_n$. The
vector $e_n$ is scaled to correspond to a prescribed noise level. A few computed
results are displayed in Table \ref{tab4}. The table shows the relative error
$\|x_{\alpha,n,\ell}^\delta-x_n\|_2/\|x_n\|_2$ to be independent of $n$ for $n$ large, and
to decrease as the noise level \eqref{nl} decreases.

{\small
\begin{table}[ht]
\centering
\begin{tabular}{cccccc}
$n$ & $h_\ell$ & $\nu$ & $\alpha$ & $\frac{\|x_{\alpha,n,\ell}^\delta-x_n\|_2}{\|x_n\|_2}$ &
$\frac{\|x_{\alpha,n,\ell}^\delta-x_{\alpha,n}^\delta\|_2}{\|x_n\|_2}$ \\[1mm] \hline
$1000$ & $3.01\cdot 10^{-4}$ & $1\cdot 10^{-2}$ & $5.25\cdot 10^{-3}$ & $3.30\cdot 10^{-1}$
& $9.73\cdot 10^{-5}$ \\
$2000$ & $4.70\cdot 10^{-4}$ & $1\cdot 10^{-2}$ & $5.28\cdot 10^{-3}$ & $3.30\cdot 10^{-1}$
& $1.55\cdot 10^{-4}$ \\
$4000$ & $3.16\cdot 10^{-4}$ & $1\cdot 10^{-2}$ & $5.17\cdot 10^{-3}$ & $3.29\cdot 10^{-1}$
& $1.28\cdot 10^{-4}$ \\
[1mm]
$1000$ & $3.01\cdot 10^{-4}$ & $1\cdot 10^{-3}$ & $1.98\cdot 10^{-3}$ & $1.86\cdot 10^{-1}$
& $3.23\cdot 10^{-4}$ \\
$2000$ & $4.70\cdot 10^{-4}$ & $1\cdot 10^{-3}$ & $2.27\cdot 10^{-3}$ & $1.90\cdot 10^{-1}$
& $4.49\cdot 10^{-4}$ \\
$4000$ & $3.16\cdot 10^{-4}$ & $1\cdot 10^{-3}$ & $2.00\cdot 10^{-3}$ & $1.86\cdot 10^{-1}$
& $4.36\cdot 10^{-4}$ \\
\end{tabular}
\caption{Example 5.3: The {\sf baart} test problem for $\ell=10$, three sizes $n$, and two
noise levels $\nu$.}\label{tab4}
\end{table}}

The singular values of the matrices $A_n$, when ordered in decreasing order, decrease
rapidly with increasing index. It therefore is not meaningful to choose $\ell$ larger
than $10$. The largest singular value of all the matrices $H_{11,10}$ generated for Table
\ref{tab4} is $3.23$ and the smallest one for all matrices is about $1\cdot 10^{-13}$.

We remark that since the singular values of $A$ decrease exponentially with
their index number, the condition \eqref{InvertabilityCond} is not valid for any finite
$l$.  Nevertheless, this example illustrates that the approximation method described in
this paper also can be applied in this situation. ~~~ $\Box$

\section{Conclusion and extensions}\label{sec6}
The paper presents an analysis of the influence of discretization and truncation errors
on the computed approximate solution. These errors are caused by replacing an operator $A$
first by a large matrix $A_n$, which in turn is approximated by a matrix $\Aml{n}{\ell}$
of rank at most $\ell\ll n$. The choice of the regularization parameter in Tikhonov
regularization is discussed. Computed example illustrate the theory.

The matrix $\Aml{n}{\ell}$ is determined by the application of $\ell$ steps of the
Arnoldi process to the large matrix $A_n$. Other approaches to determine low-rank
approximations are available, such as methods based on Golub--Kahan bidiagonalization
or block Golub--Kahan bidiagonalization; see, e.g., Bentbib et al. \cite{BEGJOR} and
Gazzola et al. \cite{GNR2}. The analyses for these methods differ from the one of this
paper and are presently being pursued.

\end{document}